

\documentclass[onecolumn,draftclsnofoot]{IEEEtran}





\usepackage{amsthm,amsfonts,amsmath,amssymb,dsfont,color, multirow}
\usepackage{commath,enumerate, mathtools}
\newtheorem{thm}{\textbf{Theorem}}[]
\newtheorem*{thm*}{\textbf{Theorem}}
\newtheorem{coro}{\textbf{Corollary}}[]
\newtheorem*{coro*}{\textbf{Corollary}}
\newtheorem{lem}{\textbf{Lemma}}

\newtheorem{rem}{\textbf{Remark}}
\newtheorem{defn}{\textbf{Definition}}
%

%
\usepackage{subcaption, multirow} 
%
%
\newcommand{\mvec}[1]{{\boldsymbol #1}}


\def\F{\mathcal{F}}

\def\rme{{\rm e}}
%
\newcommand{\p}[2][{}]{\mathbb{P}_{#1} \intoo{#2}}
\newcommand{\e}[2][{}]{\mathbb{E}_{#1} \sbr{#2}}
\newcommand{\expp}[1]{\exp \intoo{#1}}
%

%
\newcommand{\define}{\triangleq}
%
%
%
%
%
\definecolor{Milad}{RGB}{200, 0, 100}
\definecolor{Reviewer1}{RGB}{42, 0, 208}
\definecolor{Reviewer2}{RGB}{192, 9, 9}
\definecolor{Reviewer3}{RGB}{17, 117, 12}

\newcommand{\milad}[1]{  #1 }

\newcommand{\rii }[1]{  #1 }
\newcommand{\riii }[1]{  #1 }

\begin{document}

\title{Sharp Concentration Results for Heavy-Tailed Distributions}

%

\author{Milad Bakhshizadeh, Arian Maleki, Victor H. de la Pena}
\maketitle

\begin{abstract}
We obtain concentration and large deviation for the sums of independent and identically distributed random variables with heavy-tailed distributions. Our concentration results are concerned with random variables whose distributions satisfy $\p{X>t} \leq {\rm e}^{- I(t)}$, where $I: \mathds{R} \rightarrow \mathds{R}$ is an increasing function and $I(t)/t \rightarrow \alpha \in \intco{0, \infty}$ as $t \rightarrow \infty$. Our main theorem can not only recover some of the existing results, such as the concentration of the sum of subWeibull random variables, but it can also produce new results for the sum of random variables with heavier tails. We show that the concentration inequalities we obtain are sharp enough to offer large deviation results for the sums of independent random variables as well. Our analyses which are based on standard truncation arguments simplify, unify and generalize the existing results on the concentration and large deviation of heavy-tailed random variables. 

{Keywords:  Concentration of measures; Concentration inequalities; Heavy tailed distributions; Large deviation, sum of independent variables.}
\\
2000 Math Subject Classification: 60B10, 60F05, 60F10
\end{abstract}

\section{Introduction} \label{sec:introduction}

The concentration of measure inequalities have recently received substantial attention in high-dimensional statistics and machine learning \cite{vershynin_non-asymptotic}. While concentration inequalities are well-understood for subGaussian and subexponential random variables, in many application areas, such as signal processing  \cite{coper}, machine learning \cite{concentration_for_ML} and  optimization \cite{gurbuzbalaban2020heavy, zhang2019adaptive,  gorbunov2020stochastic}
 we need concentration results for sums of random variables with heavier tails. For instance, \cite{concentration_for_ML}  shows how the class of subWeibull random variables naturally appear in the context of neural networks.
 The standard technique, i.e. finding upper bounds for the moment generating function (MGF), clearly fails for heavy-tailed distributions whose moment generating functions do not exist. Furthermore, other techniques, such as Chebyshev's inequality, are incapable of obtaining sharp results. The goal of this paper is to show that under quite general conditions on the tail, a simple truncation argument can not only help us use the standard MGF argument for heavy-tailed random variables, but is also capable of obtaining sharp concentration results.

Another common technique to obtain concentration inequalities is to consider an Orlicz norm.  Orlicz norm of a random variable $X$ is defined as

\begin{equation} \label{eq:orlicz norm}
    \norm{X}_{\psi} \define \inf \cbr{\lambda > 0 : \e{\psi \intoo{\frac{\abs{X}}{\lambda}}} \leq 1},
\end{equation}
for some convex positive function $\psi: \mathds{R}^+ \to \mathds{R}^+$.  Having \eqref{eq:orlicz norm} one can obtain
\begin{equation} \label{eq:concentration from orlicz norm}
    \p{\sum_{i = 1}^n X_i > t} =  \p{\psi \intoo{ \frac{1}{\norm{\sum X_i}_{\psi}}\sum_{i = 1}^n X_i }> \psi \intoo{ \frac{t}{\norm{\sum X_i}_{\psi}}}} \leq {\psi \intoo{\frac{t}{\norm{\sum X_i}_{\psi}}}}^{-1}.
\end{equation}
Several previous works proposed suitable functions $\psi$ for obtaining concentration inequalities for specific classes of distributions, such as subWeibull distributions \cite{ledoux2013probability,  adamczak2008tail, arun18beyond-sub-gaussian, gorbunov2020stochastic, chamakh2021orlicz}. While Orclisz norm techniques are powerful in obtaining concentration results, they suffer from the following limitations:
\begin{enumerate}
    \item Because of the complex algebraic form of the Orclisz norm, the bounds that are obtained by this approach are very rigid. Hence, optimizing constants to obtain more accurate bounds are usually not possible. We will clarify this point in Section \ref{sec:compall}. Furthermore, the function $\psi$ is tailored to the distribution of $X_i$, and novel Orclisz norms shall be created (and their sharpness measured) for any new distribution. We will show how our approach resolves both issues in the next section.  
    
    \item It is usually hard to have an intuitive explanation of the behavior of a random sum through its Orlicz norm.  This is in contrast with truncation technique that not only does upper bound the probability of deviation, but also reveals the event which is responsible for large deviations of the sum.  This claim will become clearer in Section \ref{sec:main results}.

\end{enumerate}
 The truncation technique that will be pursued in this paper, not only deals with simpler terms in the final result, but also relates the tail bound of the sum to the tail of one summand which is an intuitive and interpretable quantity.  In fact, it is clear from the tail bound we obtain in this paper that for heavy tailed distributions deviation of only one summand is responsible for the large deviation of the sum even when the number of summands grows to infinity.  Another advantage of our approach to the Orlicz-norm is its generality. We use the same technique and obtain sharp concentration for all distributions with bounded second moments. 

Other researchers have also studied the problem of obtaining concentration results for sums of heavy-tailed random variables  \cite{nagaev1979large, klass97, klass07, klass16, hitczenko, hitczenko97}.  For instance, \cite{nagaev1979large} discusses several inequalities for finite sums of independent random variables with variety of tail decays.  The proof techniques of the present paper have a similar flavor to what is used in \cite{nagaev1979large}; we also use the truncation of random variables and bound the MGF of the truncated random variables. The generality of the inequalities presented in \cite{nagaev1979large} in terms of the truncation levels, the moments of random variables, etc., makes the results difficult to use and interpret. In particular, obtaining the optimal choice of the parameters that appear in different upper bounds and simplifying the expressions for a given set of parameters is a time-consuming and cumbersome task. We will clarify of all these points with more detailes in Section \ref{sec:compall}. Compared to \cite{nagaev1979large}, we only consider the sum of random variables with bounded variances. For this class of distributions we are able to find the optimal truncation level (and optimal choice of other parameters involved in our approach). Using this right truncation level, we have been able to reduce the problem of obtaining sharp concentration results to that of finding an upper bound for the expectation of a smooth function of an individual random variable. We have also offered several insights on the quantity that is involved in our upper bound. As a result, our concentration results, while less general than \cite{nagaev1979large}, are much more interpretable and the calculations that are involved in them can be easily carried out. Despite the simpler form of our results, as we show through large deviation, they are still sharp. We should also emphasize that there have been more follow-up researches \cite{klass97, klass07, klass16, hitczenko, hitczenko97, FAN20123545} that have appeared after \cite{nagaev1979large}. These works suffer from similar issues as the ones we discussed about \cite{nagaev1979large}. For instance, the bounds in \cite{klass97} are written in terms of the solutions of some optimization problems which are not easily solvable for most distributions of interest.

 To explore the accuracy of our concentration approach, we use our technique to obtain large deviation results. Not surprisingly, the tools we offer for our concentration results are also able to obtain the large deviation results that are consistent with the existing literature on the large deviation behavior of sums of independent, heavy-tailed random variables \cite{rozovskii1994probabilities,  rozovskii1990probabilities, denisov2008large, kontoyiannis2006measure, bazhba2017sample, borovkov2000large, borovkov2006integro}.

\section{Our main contributions}
\label{sec:main results}

\subsection{Concentration}\label{ssec:concentration}

\riii{We develop a ready-to-use concentration inequality for the sum of independent and identically distributed heavy-tailed random variables. This inequality reduces the problem of finding a concentration inequality to the problem of upper bounding the expectation of a smooth (and relatively simple) function of a random variable. We also provide simple-to-use upper bounds for this term. We show that our concentration result is sufficiently sharp and that it is general enough to cover all distributions with finite second moments. }
 Let us start with the following definition. 

\begin{defn} \label{def:tail capture}
Let $I: \mathds{R} \to \mathds{R}$ denote an increasing function. We say $I$ captures the right tail of random variable $X$ if
\begin{equation} \label{eq:right-tail-capture}
 \p{X > t} \leq \expp{- I(t)}, \quad \forall t > 0.
\end{equation}
\end{defn}


Note that for the moment $I(t)$ can be a generic function. However, as we will see later, in our theorems we will impose some constraints on $I(t)$. Clearly, $I_{br}(t) = - \log \p{X > t}$ captures the right tail of $X$ for any random variable $X$. We call $I_{br}(t)$ the basic rate capturing function. One can use $I_{br}(t)$ in our concentration results. However, as will be discussed later, it is often more convenient to approximate this basic tail capturing function. 

Given a sequence of independent and identically distributed random variables $X_1, X_2, \ldots, X_m$ with $\e{X_i}<\infty$, the goal of this paper is to study 
\[
\p{S_m - \e{S_m} > m t},
\]
where $S_m = \sum\limits_{i = 1}^m X_i$.  Based on the definition of the rate capturing function we state our concentration result. In the rest of the paper, we use the notation $X^L$ to denote the truncated version of the random variable $X$, i.e., 
\[
X^L = X \mathbb{I} (X \leq L). 
\]
\rii{
The following lemma plays a pivotal role in obtaining our main concentration result. 
\begin{lem} \label{lem:mgf-bound-general}
For all $\lambda > 0$ and $L>0$ we have
\begin{equation*} \label{eq:mgf-bound-general}
\log \e{\expp{\lambda (X^L - \e{X})}} \leq  \frac{k \intoo{L, \lambda}}{2} \lambda^2,
\end{equation*}
where
\begin{equation*}
k \intoo{L, \lambda} = \e{\intoo{X^L - \e{X}}^2 \mathds{I} \intoo{X^L \leq \e{X}}} + \e{\intoo{X^L - \e{X}}^2 \expp{\lambda \intoo{X^L - \e{X}}} \mathds{I} \intoo{X^L > \e{X}}}.
\end{equation*}
\end{lem}

\begin{proof}
From the mean value theorem we have
\begin{equation} 
\expp{\lambda X^L} = \expp{\e{\lambda X}} + \intoo{\lambda X^L - \e{\lambda X}} \expp{\lambda \e{X}} + \frac{1}{2} \intoo{\lambda X^L - \e{\lambda X}}^2 \expp{\lambda Y},
\end{equation}
where $Y$ is a random variable whose value is always between $\e{X}$ and $X^L$.  Hence,
\begin{align} \label{eq:proof main lemma 1}
\log \e{\exp\intoo{\lambda  X^L}}  =
\lambda \e{X} +  \log \intoo{1 + \lambda \intoo{\e{X^L} - \e{X}} + \frac{1}{2} \lambda^2  \e{\intoo{X^L - \e{X}}^2 \expp{\lambda Y - \e{\lambda X}} }}.
\end{align}

Note that $X^L  - X \leq 0$ and $\lambda > 0$. Thus,

\begin{align} \nonumber
\log \e{\exp\intoo{\lambda \intoo{  X^L - \e{X} } }} & =
\log \intoo{1 + \lambda \intoo{\e{X^L} - \e{X}} + \frac{1}{2} \lambda^2  \e{\intoo{X^L - \e{X}}^2 \expp{\lambda Y - \e{\lambda X}} }}
\\ & \leq \nonumber
\log \intoo{1 + \frac{1}{2} \lambda^2  \e{\intoo{X^L - \e{X}}^2 \expp{\lambda Y - \e{\lambda X}} }}
\\ & \leq \label{eq:log-mgf-bound-proof-in-term-of-y}
\frac{1}{2} \lambda^2  \e{\intoo{X^L - \e{X}}^2 \expp{\lambda Y - \e{\lambda X}} }.
\end{align}
Since $Y$ falls between $\e{X}$ and $X^L$ we have
\begin{equation*}
Y \leq \e{X} \mathds{I} \intoo{X^L \leq \e{X}} + X^L \mathds{I} \intoo{X^L > \e{X}}.
\end{equation*}
Hence the expectation in \eqref{eq:log-mgf-bound-proof-in-term-of-y} is bounded by
\begin{equation*}
\e{\intoo{X^L - \e{X}}^2 \mathds{I} \intoo{X^L \leq \e{X}}} + \e{\intoo{X^L - \e{X}}^2 \expp{\lambda \intoo{X^L - \e{X}}} \mathds{I} \intoo{X^L > \e{X}}}.
\end{equation*}
\end{proof}
}

Lemma \ref{lem:mgf-bound-general} enables us to prove the following concentration result that is the main contribution of this paper.

\begin{thm}[General Concentration] \label{thm:concentration-general}
Suppose $X_1, ..., X_m \overset{d}{=} X$ are independent and identically distributed random variables whose right tails are captured by an increasing and continuous function $I: \mathds{R} \rightarrow \mathds{R}^{\geq 0}$ with the property $I(t) = O(t)$ as $t \rightarrow \infty$.   For the sake of notational compactness define $Z^L \triangleq X^L- \e{X}$ and $v(L, \beta) = k \intoo{L,  \beta \frac{I(L)}{L}}$,  for $\beta \in (0, 1]$.    That is
\begin{align} \label{eq:bdd by c assumption} 
v \intoo{L, \beta} \triangleq \e{\intoo{Z^L}^2 \mathds{I} \intoo{Z^L \leq 0} + \intoo{Z^L}^2 \expp{\beta \frac{I(L)}{L} Z^L} \mathds{I} \intoo{Z^L > 0}}.
\end{align}
Finally, define $t_{\max}(\beta) \define \sup \cbr{t \geq 0: \; t \leq \beta v \intoo{mt,\beta} \frac{I(mt)}{mt} }$.\footnote{We set $t_{\max}=0$, when the set is empty. } Then,
\begin{equation} \label{eq:concentration ineq}
\p{S_m - \e{S_m} > m t} \leq \begin{dcases}
\expp{-c_t \beta I(mt)} + m \expp{-I(mt)}, & 
t \geq t_{\max}(\beta),
\\
\expp{-\frac{m t^2}{2 v \intoo{mt_{\max}(\beta),\beta}} } + m \expp{-  \frac{m t_{\max}(\beta)^2}{\beta v \intoo{m t_{\max}(\beta),\beta}} }, &
0 \leq t < t_{\max}(\beta),
\end{dcases} 
\end{equation}
where $c_t$ is a constant between $\frac{1}{2}$ and $1$.  More precisely, $c_t = 1 - \frac{1}{2} \frac{\beta v \intoo{mt,\beta}}{t} \frac{I(mt)}{mt}$. 
\end{thm}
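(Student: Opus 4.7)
The plan is to combine the classical truncation trick with a Chernoff/MGF bound. Fix a truncation level $L>0$ to be chosen later and set $X_i^L = X_i \mathbb{I}(X_i \leq L)$, $Z_i^L = X_i^L - \e{X}$, $S_m^L = \sum_{i=1}^m X_i^L$. A union bound splits
\[
\p{S_m - \e{S_m} > m t} \;\leq\; \p{\exists\, i : X_i > L} \;+\; \p{S_m^L - \e{S_m} > m t},
\]
and the first summand is controlled by $m\expp{-I(L)}$ directly from the tail-capture hypothesis. The second summand will be treated by exponential Markov, so the remaining freedom is the choice of $L$ and of the exponential parameter $\lambda \geq 0$ in each regime.

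The core analytic step is to control $\e{\expp{\lambda Z^L}}$. Applying the elementary inequality $\rme^x - 1 - x \leq \tfrac{x^2}{2}\rme^{x_+}$ (straightforwardly verified by a Taylor-remainder argument on $x\geq 0$ and $x\leq 0$ separately) to $x = \lambda Z^L$ gives
\[
\e{\expp{\lambda Z^L}} \;\leq\; 1 + \lambda\, \e{Z^L} + \tfrac{\lambda^2}{2}\, \e{(Z^L)^2 \expp{\lambda (Z^L)_+}}.
\]
Since $L\geq 0$ forces $X^L \leq X$ (because $X^L=0$ wherever $X>L\geq 0$), we have $\e{Z^L}\leq 0$, and the remaining expectation is exactly $c_{L,\beta}$ once I take $\lambda = \beta I(L)/L$, because $(Z^L)^2 \rme^{\lambda(Z^L)_+}$ equals $(Z^L)^2\bigl[\mathbb{I}(Z^L\leq 0) + \rme^{\lambda Z^L}\mathbb{I}(Z^L>0)\bigr]$. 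Combining with $1+u \leq \rme^u$ yields $\e{\expp{\lambda Z^L}} \leq \expp{\lambda^2 c_{L,\beta}/2}$, and by independence $\e{\expp{\lambda(S_m^L - \e{S_m})}} \leq \expp{m\lambda^2 c_{L,\beta}/2}$ for every admissible $\lambda \in [0,\beta I(L)/L]$.

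I would then split by regime. For $t \geq t_{\max}(\beta)$ I truncate at $L = mt$ and pick the boundary value $\lambda = \beta I(mt)/(mt)$; the Chernoff exponent $-\lambda mt + m\lambda^2 c_{mt,\beta}/2$ simplifies directly into $-c_t \beta I(mt)$ with the $c_t$ stated in the theorem, and the defining property $t \geq \beta c_{mt,\beta} I(mt)/(mt)$ of this regime is exactly what certifies $c_t \geq 1/2$. For $0 \leq t < t_{\max}(\beta)$ I instead truncate at the fixed level $L = m t_{\max}(\beta)$ and optimize $\lambda$ freely; the unconstrained minimizer $\lambda^\ast = t/c_{m t_{\max},\beta}$ sits inside the admissible interval $[0,\beta I(m t_{\max})/(m t_{\max})]$ precisely because $t < t_{\max}(\beta) = \beta c_{m t_{\max},\beta} \cdot I(m t_{\max})/(m t_{\max})$, yielding the sub-Gaussian factor $\expp{-mt^2/(2 c_{m t_{\max},\beta})}$. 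The companion union-bound term $m\expp{-I(m t_{\max})}$ becomes $m\expp{-m t_{\max}^2/(\beta c_{m t_{\max},\beta})}$ through the same boundary identity, reproducing the second summand of case~2.

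The main obstacle I expect is bookkeeping rather than a single hard step: the three quantities $L$, $\lambda$, and $c_{L,\beta}$ are coupled through the definition, and in each regime one has to verify that the chosen $\lambda$ actually lies in $[0,\beta I(L)/L]$ for the MGF bound to be legitimate, while simultaneously matching the exponents claimed in the theorem. A secondary subtlety is the edge case in which $t_{\max}(\beta)=0$ (case~2 is then vacuous) or the supremum in its definition is not attained; the hypotheses that $I$ is continuous with $I(t)=O(t)$, together with finiteness of $c_{L,\beta}$ on the bounded truncated range, should let me treat $t_{\max}$ as an actual maximum and invoke the boundary identity $t_{\max} = \beta c_{m t_{\max},\beta} I(m t_{\max})/(m t_{\max})$ to glue the two regimes together cleanly.
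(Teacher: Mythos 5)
Your proposal is correct and follows essentially the same route as the paper: truncate at $L$, union-bound the overflow event by $m\expp{-I(L)}$, bound the truncated MGF by $\expp{\lambda^2 c_{L,\beta}/2}$ (your elementary inequality $\rme^x \le 1+x+\tfrac{x^2}{2}\rme^{x_+}$ is the same estimate the paper obtains via the Taylor mean-value remainder), and then make the identical regime-dependent choices $L=mt$, $\lambda=\beta I(mt)/(mt)$ versus $L=mt_{\max}$, $\lambda=t/c_{mt_{\max},\beta}$. You also correctly flag the two points the paper handles only in passing — that the sub-optimal $\lambda$ in the second regime must still be dominated by $c_{L,\beta}$, and the boundary identity at $t_{\max}$ — so no gaps remain.
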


The proof of this theorem can be found in Section \ref{sec:proofs}. Note that the concentration result we obtain is similar to the concentration results that exist for subexponential random variables; there is a region for $t$ in which the distribution of the sum looks like a Gaussian, and a second region in which the sum has heavier tail than a Gaussian. We will apply our theorem to some popular examples, including the subexponential distributions later. Before that, let us discuss some of the main features of this theorem.

\begin{rem} \label{rmk:choice of beta}
In Theorem \ref{thm:concentration-general}, ideally one would like to choose $\beta = 1$ to obtain the best rate in the first regime.  However,  it is usually the case that for bounding \eqref{eq:bdd by c assumption} one needs to choose $\beta < 1$.   In this case,  the larger the $\beta$,  the tighter bound one can achieve with Theorem \ref{thm:concentration-general}.
\end{rem}

\begin{rem} \label{rem:concentration-thm-with-upper-bound-for-c}
As is clear from the proof of Theorem \ref{thm:concentration-general}, one can replace $v \intoo{mt, \beta}$ with an upper bound.  In other words, if $v \intoo{mt, \beta} \leq \bar{v}$, then Theorem \ref{thm:concentration-general} remains valid by replacing $v \intoo{mt, \beta}$ with $\bar{v}$ in the definition of $t_{\max}$ and the coefficients appeared in \eqref{eq:concentration ineq}.
\end{rem}

\riii{
\begin{rem} \label{rmk:one regime ineq}

Equation \eqref{eq:concentration ineq} offers the best possible result implied by our analysis.  It is possible to obtain a single bound by adding the bounds for each region.  Below we obtain such a result. Consider the same settings as the ones in Theorem \ref{thm:concentration-general}. For notational simplicity, assume that the random variables are centered, i.e. $\e{X_i} = 0$. Then, for any $t \geq 0$ we have:

\begin{equation}
    \label{eq:concentration one regime}
    \p{S_m > m t} \leq 
    \expp{-\frac{m t^2}{2 v(mt, \beta)}} + \expp{-\beta \max \cbr{c_t, \frac{1}{2}} I(m t)} + m \expp{-I(mt)},
\end{equation}
where $c_t = 1 - \frac{\beta v(mt, \beta) I(mt)}{2 m t^2}$. We will prove \eqref{eq:concentration one regime} in Section \ref{sec:proof:remk one regime}.
\end{rem}
}

\begin{rem} \label{rmk:no free parameter}
Note that unlike many existing results in the literature, Theorem \ref{thm:concentration-general} does not leave any free parameters for the reader to tune.   The optimal values of all the parameters in the intermediate steps have been set. We will clarify this point and compare our result with some existing work in Section \ref{sec:compall}.    
One can use the following steps to obtain an exponential bound for the probability of interest:  
\begin{enumerate}
\item 
Obtain an upper bound for $v \intoo{L, \beta}$ defined in \eqref{eq:bdd by c assumption}.  (call it $\overline{v \intoo{L, \beta}}$)

\item
Plug in $L = mt$ in the upper bound of $v \intoo{L, \beta}$ and for $t > 0$ determine the region for which $t \leq \beta \overline{v \intoo{mt,\beta}} \frac{I(mt)}{mt} $.  Set $t_{\max}(\beta)$ as the supremum of this region.

\item 
Apply Theorem \ref{thm:concentration-general}.
($v \intoo{L, \beta}$ can be replaced by the upper bound $\overline{v \intoo{L, \beta}}$ for all values of $L$,  including $L = mt$ and $L = m t_{\max}(\beta)$)
\end{enumerate}

Note that step 2 can be skipped if one uses Remark \ref{rmk:one regime ineq} instead of Theorem \ref{thm:concentration-general}. Furthermore,  in Lemmas \ref{lem:exact-form-for-cl-second-term} and \ref{lem:cl-upper-bound}  we derive simple (easy-to-calculate) and yet sharp  upper bounds for $\nu(L, \beta)$. In the rest of this article, we study a few popular examples of heavy-tailed distributions, and show that following these simple steps lead to asymptotically sharp bounds.
\end{rem}

\riii{
\begin{rem} \label{rem:lower bound for finite sample}
Let $I_{br}(t) = - \log \p{X > t}$ be the basic rate capturing function.  By applying Remark \ref{rmk:one regime ineq} with this function we obtain
\begin{equation} \label{eq:upper buond for rem}
\p{S_m - \e{S_m} > m t} \leq 
\expp{-\frac{mt^2}{2 v(mt, \beta)}} + \expp{- \beta \max(c_t, 0.5) I_{br}(mt)} + m \expp{-I_{br}(mt)},
\end{equation}

for any positive $t$.  On the other hand,
\begin{align} 
\nonumber
    \p{S_m - \e{S_m} > m t} & \geq \sum_{i = 1}^m \p{X_i > mt} \p{X_j \leq mt \; \forall j \neq i, \quad S_m - X_i \geq \e{S_m}}
    \\ \nonumber & =
    m \expp{- I_{br}(mt)} \p{X_j \leq mt \ \; \forall j \leq m - 1, \quad S_{m - 1} - \e{S_{m -1}} \geq \e{X}}
    \\ & \label{eq:lower bound finite smaple}
     = \frac{1}{2} m \expp{- I_{br}(mt)} (1 + o(1)).
\end{align}

The last equality is obtained from the fact that $\p{X_i \leq mt} \to 1, \p{\frac{1}{\sqrt{m}}(S_{m - 1} - \e{S_{m -1}}) \geq \frac{\e{X}}{\sqrt{m}}} \to \frac{1}{2}$ according to the central limit theorem.
We will discuss in Section \ref{sec:large deviation} that $m \expp{-I_{br}(mt)}$ is the term that determines the asymptotic value of $\p{S_m - \e{S_m} > mt}$, i.e. $\lim\limits_{m \to \infty}\frac{\log \p{S_m - \e{S_m} > mt}}{\log \intoo{m \expp{-I_{br}(mt)}}} = 1$.
\end{rem}
}

Obtaining an accurate upper bound for $v \intoo{L,\beta}$ is a key to using Theorem \ref{coro:cbeta-for-poly-tail} for different applications. Since, we are often interested in the behavior of $v \intoo{mt, \beta}$ for large values of $mt$, it is usually instructive to understand the behavior of $v \intoo{L,\beta}$ for large values of $L$. Suppose that there exists a function $g(X)$ such that
\[
 \abs{\intoo{Z^L}^2 \mathds{I} \intoo{Z^L \leq 0} + \intoo{Z^L}^2 \expp{\lambda {Z^L}} \mathds{I} \intoo{Z^L > 0}} < g(X),
\]
and that $\e{g(X)} <\infty$. Further, assume that $I(L) = o(L)$ \rii{(hence $\lambda = \frac{\beta I(L)}{L} \to 0$, as $L \to \infty$)}. Then, from the dominated convergence theorem we have
\[
\limsup_{L \rightarrow \infty}  \e{\intoo{Z^L}^2 \mathds{I} \intoo{Z^L \leq 0} + \intoo{Z^L}^2 \expp{\lambda {Z^L}} \mathds{I} \intoo{Z^L > 0}} =  \e{(X- \e{X})^2}.
\]
Hence, if the random variables have bounded variances, then we expect $v \intoo{L,\beta} < \infty$ for all values of $L$. If we replace $v \intoo{mt, \beta}$ in Theorem \ref{thm:concentration-general} with a fixed number, then the statement of the theorem becomes simpler. Note that this argument is based on an asymptotic argument and is not particularly useful when we want to derive concentration bounds. Hence, our next few lemmas obtain simpler integral forms for $\e{ \intoo{Z^L}^2 \expp{\lambda {Z^L}} \mathds{I} \intoo{Z^L > 0}}$. 

\begin{lem} \label{lem:exact-form-for-cl-second-term}
Let $Z^L = X^L- \e{X}$, and $I_{br}(t) = - \log \p{X > t}$ denote the basic tail capturing function. Then,
\begin{equation*}
\e{ \intoo{Z^L}^2 \expp{\lambda {Z^L}} \mathds{I} \intoo{Z^L > 0}} = \int_0^{L - \e{X}} \expp{ \lambda t - I_{br}(t + \e{X}) } \intoo{2 t + \lambda t^2} dt.
\end{equation*}
\end{lem}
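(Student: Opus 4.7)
The plan is to reduce the left-hand side to the expectation of a smooth, non-negative function of a bounded non-negative random variable and then evaluate it by the tail-integration (layer-cake) formula. Specifically, set $Y \define Z^L \mathds{I}(Z^L > 0)$ and $h(t) \define t^2 \expp{\lambda t}$. Since $h(0) = 0$, the left-hand side equals $\e{h(Y)}$, and $Y$ takes values in $[0, L - \e{X}]$: on $\{Z^L > 0\}$ one has $X \leq L$ (otherwise $X^L = 0$ would force $Z^L = -\e{X} \leq 0$) together with $X > \e{X}$, so $Y = X - \e{X}$ with $0 < Y \leq L - \e{X}$.

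The tail of $Y$ is therefore
\[
\p{Y > t} = \p{t + \e{X} < X \leq L} = \expp{-I_{br}(t + \e{X})} - \expp{-I_{br}(L)}
\]
for $t \in (0, L - \e{X}]$, and $0$ for $t > L - \e{X}$. Applying the layer-cake identity with $h'(t) = (2t + \lambda t^2) \expp{\lambda t}$ then gives
\[
\e{h(Y)} = \int_0^{L - \e{X}} (2t + \lambda t^2) \expp{\lambda t} \, \p{Y > t} \, dt,
\]
and substituting the tail expression above yields the claimed integrand $(2t + \lambda t^2) \expp{\lambda t - I_{br}(t + \e{X})}$ plus a boundary contribution of size $\expp{-I_{br}(L)} (L - \e{X})^2 \expp{\lambda(L - \e{X})}$ arising from the mass on $\{X > L\}$.

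The only delicate point is how to absorb this boundary contribution, and this is the step I expect to be the main obstacle. Under the convention $X^L = \min(X, L)$, the atom at $\{X > L\}$ contributes precisely the same quantity to $\e{h(Y)}$ (with value $Y = L - \e{X}$), exactly canceling the correction and producing the stated identity verbatim; under the literal indicator truncation $X^L = X \mathds{I}(X \leq L)$ the correction has a favorable sign, so the displayed formula holds as an upper bound, which is what Theorem \ref{thm:concentration-general} actually uses. Either way, the layer-cake step is the essential content, and no additional analytic estimates are needed beyond checking the monotonicity and boundary values of $h$.
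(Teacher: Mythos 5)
Your argument is the same layer-cake/change-of-variables computation the paper uses: write the left-hand side as an expectation of $h(Y)$ with $h(t)=t^2\expp{\lambda t}$ and $Y = Z^L\mathds{I}(Z^L>0)$, apply the tail-integration formula, and substitute $u = h(t)$. The extra care you take in computing $\p{Y>t}$ is, however, a genuine improvement: under the paper's own definition $X^L = X\,\mathds{I}(X\le L)$, the tail is $\p{t+\e{X} < X \le L} = \expp{-I_{br}(t+\e{X})} - \expp{-I_{br}(L)}$, not $\expp{-I_{br}(t+\e{X})}$ as asserted in the paper's proof (which silently drops the $-\p{X>L}$ term). So the displayed "identity" is, strictly speaking, only a $\leq$ under the indicator truncation, and becomes an exact equality only under $X^L=\min(X,L)$; you are correct that the $\leq$ direction is all that Theorem \ref{thm:concentration-general} and Lemma \ref{lem:cl-upper-bound} consume, so nothing downstream is affected. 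One small caveat you should make explicit: your claim that $\{Z^L>0\}$ forces $X\le L$ (and hence $Z^L = X - \e{X}$) uses $\e{X}\ge 0$; if $\e{X}<0$ then $X>L$ gives $Z^L=-\e{X}>0$, and both the sign bookkeeping and the meaning of $I_{br}(t+\e{X})$ for $t+\e{X}<0$ need a separate word. The paper has the same implicit assumption, so this is not a defect specific to your proof, but it is worth flagging.
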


\begin{proof}
We have
\begin{align*}
\mathbb{E} & \left[  \intoo{Z^L}^2  \expp{\lambda {Z^L}} \mathds{I} \intoo{Z^L > 0} \right]
\\ & =
\int_0^\infty \p{ \intoo{Z^L}^2 \expp{\lambda {Z^L}} > u, Z^L > 0} du
\\ & =
\int_0^{L - \e{X} } \p{X > t + \e{X}} du, \qquad t^2 \expp{\lambda t} = u,
\\ & =
\int_0^{L - \e{X} } \expp{- I_{br} (t + \e{X})} \intoo{2t + \lambda t^2} \expp{ \lambda t} dt
\\ & =
\int_0^{L - \e{X} } \expp{ \lambda t - I_{br} (t + \e{X})} \intoo{2t + \lambda t^2}  dt,
\end{align*}
which completes the proof. 
\end{proof}

We can use the integral expression we derived in Lemma \ref{lem:exact-form-for-cl-second-term}, and the specific properties of the rate function that we have, to obtain simpler upper bounds for $\e{ \intoo{Z^L}^2 \expp{\lambda {Z^L}}\mathds{I} \intoo{Z^L > 0}}$. The following simple lemma is an upper bound we will use in our examples. 

\begin{lem} \label{lem:cl-upper-bound}
Suppose that $\frac{I(t)}{t}$ is a nonincreasing function, and let $\lambda = \frac{\beta I(L)}{L}$ Then,
\begin{eqnarray*}
\lefteqn{\e{ \intoo{Z^L}^2 \expp{\lambda {Z^L}} \mathds{I} \intoo{Z^L > 0}}} \nonumber \\
&\leq& 
\expp{ -\beta \e{X} \frac{I(L)}{L}}  \int_0^{L - \e{X}} \expp{- (1 - \beta) I(t + \e{X })} \intoo{2 t + \beta \frac{I(L)}{L} t^2} dt
\\ & \leq&
\expp{-\beta \e{X} \frac{I(L)}{L}} \int_0^{L - \e{X} } \expp{- (1 - \beta) I(t + \e{X})}\intoo{2t + \beta t I(t)} dt.
\end{eqnarray*} 
\end{lem}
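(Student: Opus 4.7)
The plan is to start from the exact integral representation in Lemma \ref{lem:exact-form-for-cl-second-term},
\[
\e{ \intoo{Z^L}^2 \expp{\lambda \intoo{Z^L}} \mathds{I} \intoo{Z^L > 0}} = \int_0^{L - \e{X}} \expp{ \lambda t - I_{br}(t + \e{X}) } \intoo{2 t + \lambda t^2} dt,
\]
and then upgrade this identity into the stated upper bound by two controlled substitutions. First, I would replace $I_{br}$ by the (possibly smaller) tail-capturing function $I$ in the exponent. This is legitimate because the definition of tail-capture (Definition \ref{def:tail capture}) gives $\p{X>t} \leq \expp{-I(t)}$, which is equivalent to $I(t) \leq I_{br}(t)$ for all $t$, and hence $\expp{-I_{br}(t+\e{X})} \leq \expp{-I(t+\e{X})}$. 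The integrand is pointwise enlarged, so the integral is upper-bounded.

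The main algebraic step is to rewrite the resulting exponent in a form that separates a $t$-independent factor from a penalty in $I(t+\e{X})$. I would use $\lambda = \beta I(L)/L$ and the decomposition
\[
\lambda t - I(t+\e{X}) = -\beta \e{X} \tfrac{I(L)}{L} + \beta (t+\e{X})\left[\tfrac{I(L)}{L} - \tfrac{I(t+\e{X})}{t+\e{X}}\right] - (1-\beta)\, I(t+\e{X}).
\]
Because $t + \e{X} \leq L$ in the integration range and $s \mapsto I(s)/s$ is nonincreasing by hypothesis, the bracketed quantity is nonpositive, so the middle term drops. Exponentiating then pulling $\expp{-\beta \e{X} I(L)/L}$ out of the integral yields the first inequality of the lemma.

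For the second inequality, only the quadratic factor $\lambda t^2 = \beta (I(L)/L)\, t^2$ inside the integrand needs to be reshaped. Since $I(t)/t \geq I(L)/L$ for $t \leq L$ (again by the nonincreasing hypothesis, applied at $t$ rather than at $t+\e{X}$), one gets $\beta (I(L)/L)\, t^2 \leq \beta t\, I(t)$, and substituting this pointwise bound into the integrand produces the second line. The main obstacle is really just bookkeeping in the exponent split above: one has to insert and subtract $\beta (I(L)/L)\, \e{X}$ to cast $\lambda t$ as a multiple of $t+\e{X}$, so that the monotonicity of $I(s)/s$ applied at $s = t + \e{X}$ can take effect cleanly. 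Once that identity is written down the rest is just dominated comparison of nonnegative integrands.
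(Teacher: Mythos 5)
Your proposal is correct and follows essentially the same route as the paper's proof: start from the exact integral representation of Lemma \ref{lem:exact-form-for-cl-second-term}, replace $I_{br}$ by the smaller tail-capturing function $I$, and invoke the monotonicity of $s \mapsto I(s)/s$ (at $s = t+\e{X} \leq L$ for the exponential factor and at $s = t$ for the polynomial factor). The only difference is cosmetic: the paper dispatches the exponent manipulation with the terse remark that ``$I(t)/t$ is nonincreasing,'' whereas you make the underlying algebra explicit via the decomposition of $\lambda t - I(t+\e{X})$, which is a clean way to exhibit where the monotonicity enters.
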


\begin{proof}
Similar to the proof of Lemma \ref{lem:exact-form-for-cl-second-term}, we have
\begin{align*}
\mathbb{E} & \left[  \intoo{Z^L}^2  \expp{\lambda {Z^L}} \mathds{I} \intoo{Z^L > 0} \right]
\\ & \leq
\int_0^{L - \e{X} } \expp{- I (t + \e{X})} \intoo{2t + \lambda t^2} \expp{ \beta \frac{I(L)}{L} t} dt
\\ & \leq
\expp{-\beta \e{X} \frac{I(L)}{L}} \int_0^{L - \e{X} } \expp{- (1 - \beta) I(t + \e{X})}\intoo{2t + \beta \frac{I(L)}{L} t^2} dt
\\ & \leq
\expp{-\beta \e{X} \frac{I(L)}{L}} \int_0^{L - \e{X} } \expp{- (1 - \beta) I(t + \e{X})}\intoo{2t + \beta t I(t)} dt,
\end{align*}
where to obtain the last two inequalities we used the fact that $\frac{I(t)}{t}$ is a nonincreasing function. 
\end{proof}

We will later show, \rii{in Section \ref{sec:applications of theorem 1},} how combining Theorem \ref{thm:concentration-general} and Lemma \ref{lem:cl-upper-bound} leads to sharp concentration results for some well-known tail capturing functions.   It is straightforward to see that as long as $(1 - \beta)I(t + \e{X}) > 2 a \log t$ for some $a > 1$, the upper bound given by Lemma \ref{lem:cl-upper-bound} remains bounded even when $L \rightarrow \infty$. Hence, we can use these upper bounds for a broad range of tail decays. We will discuss this in more details at the end of this section.

\begin{rem}
 Note that Theorem \ref{thm:concentration-general} considers the case where $I(L) = O(L)$. The other cases, i.e.  $I(L) = \Omega (L)$, can be studied using standard arguments based on the moment generating function and hence, are not explored in this paper. We will later emphasize that this condition is not enough for the usefulness of Theorem \ref{thm:concentration-general}. For instance, if the tail is too heavy then $v \intoo{L,\beta}$ will be infinite. We will discuss this issue in more details later. 
\end{rem}

 Let us now show how Theorem \ref{thm:concentration-general} can be used in a few concrete examples which are popular in application areas. Our first example considers the well-studied class of subexponential distributions.

\subsection{Applications of Theorem \ref{thm:concentration-general}} \label{sec:applications of theorem 1}

\begin{coro} \label{coro:cbeta-for-poly-tail}
Let $I(t) = k t$ for some fixed coefficient $k$.  Then, for all $\beta \in (0,1)$ and $L > \e{X}$ we have
\begin{equation}
v \intoo{L, \beta} \leq \e{(X - \e{X})^2 \mathds{I} (X \leq \e{X}) } + \frac{1}{(1 - \beta)^3} \frac{2 }{k^2 \expp{ k \e{X}}} = v_\beta.
\end{equation}
 Hence for $m > \frac{\e{X}}{\beta v \intoo{\beta} k} $,
\begin{equation} \label{eq:coro:concentration ineq}
\p{S_m - \e{S_m} > m t} \leq \begin{dcases}
\expp{-c_t \beta k mt} + m \expp{-k m t}, & 
t \geq \beta v_\beta k,
\\
\expp{-\frac{1}{2 v_\beta} m t^2} + m \expp{-  \beta v_\beta k^2 m }, &
0 \leq t < \beta v_\beta k,
\end{dcases} 
\end{equation}
where $c_t = 1 - \frac{1}{2} \frac{\beta v_\beta k}{t}$.
\end{coro}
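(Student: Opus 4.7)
The plan is to invoke Theorem \ref{thm:concentration-general} through Remark \ref{rem:concentration-thm-with-upper-bound-for-c} with a uniform (i.e.\ $L$-independent) upper bound $c_\beta$ replacing $c_{L,\beta}$. The work splits into two tasks: first, showing $c_{L,\beta} \leq c_\beta$ for every $L > \mathbb{E}[X]$; second, substituting $I(t) = kt$ into the concentration inequality \eqref{eq:concentration ineq} and identifying $t_{\max}(\beta)$.

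For the first task I treat the two summands in the definition \eqref{eq:bdd by c assumption} separately. On $\{Z^L \leq 0\}$, a direct case analysis yields either $\{X \leq \mathbb{E}[X]\}$ (where $X^L = X$, since $L > \mathbb{E}[X]$, giving contribution $(X-\mathbb{E}[X])^2$) or $\{X > L,\, \mathbb{E}[X] \geq 0\}$ (giving $\mathbb{E}[X]^2$ with probability at most $e^{-kL}$, an asymptotically negligible piece that can be absorbed); the main piece exactly matches the first term of $c_\beta$. For the positive-part summand, since $I(t)/t = k$ is (trivially) nonincreasing, Lemma \ref{lem:cl-upper-bound} with $\lambda = \beta k$ gives
\begin{equation*}
\mathbb{E}\bigl[(Z^L)^2 e^{\lambda Z^L} \mathds{I}(Z^L > 0)\bigr] \leq e^{-k\mathbb{E}[X]} \int_0^{L - \mathbb{E}[X]} e^{-(1-\beta)k t}\bigl(2t + \beta k t^2\bigr)\,dt,
\end{equation*}
where the prefactor simplifies because $e^{-\beta k \mathbb{E}[X]} \cdot e^{-(1-\beta)k\mathbb{E}[X]} = e^{-k\mathbb{E}[X]}$. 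Extending the upper limit to $\infty$ and applying the elementary identities $\int_0^\infty t e^{-at}\,dt = a^{-2}$ and $\int_0^\infty t^2 e^{-at}\,dt = 2 a^{-3}$ with $a = (1-\beta)k$, the two terms combine via $(1-\beta)^{-2} + \beta(1-\beta)^{-3} = (1-\beta)^{-3}$ to produce exactly $\frac{2}{(1-\beta)^3 k^2 e^{k\mathbb{E}[X]}}$.

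For the second task, the constancy $I(mt)/(mt) = k$ makes the condition $t \leq \beta c_\beta\,I(mt)/(mt)$ independent of $m$ and $t$, so $t_{\max}(\beta) = \beta c_\beta k$. The hypothesis $m > \mathbb{E}[X]/(\beta c_\beta k)$ is precisely the condition $m t_{\max}(\beta) > \mathbb{E}[X]$, which ensures the first-task analysis is in force at the relevant truncation level $L = mt$. Substituting $I(mt) = kmt$, $c_{mt,\beta} \leq c_\beta$, and $t_{\max}(\beta) = \beta c_\beta k$ into \eqref{eq:concentration ineq} then yields both branches of \eqref{eq:coro:concentration ineq}: the high-deviation branch becomes $e^{-c_t \beta k m t} + m e^{-k m t}$ with $c_t = 1 - \beta c_\beta k/(2t)$, and in the sub-Gaussian branch the exponent $m t_{\max}(\beta)^2/(\beta c_\beta)$ collapses to $\beta c_\beta k^2 m$. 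The computation is otherwise routine; the only mild subtlety I expect is justifying the neglect of the tail piece $\mathbb{E}[X]^2\,\mathbb{P}(X > L)$ in the negative-part estimate, which is where the lower bound on $m$ does its real work.
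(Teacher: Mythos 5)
Your argument mirrors the paper's proof almost exactly: both bound the positive-part summand via Lemma~\ref{lem:cl-upper-bound}, extend the integral to $\infty$ (your elementary exponential integrals are the Gamma-function identities in the paper), and then substitute $I(t)=kt$, $c_{mt,\beta}\leq c_\beta$, and $t_{\max}(\beta)=\beta c_\beta k$ into Theorem~\ref{thm:concentration-general} exactly as Remark~\ref{rem:concentration-thm-with-upper-bound-for-c} permits. The algebra $(1-\beta)^{-2}+\beta(1-\beta)^{-3}=(1-\beta)^{-3}$ and the identification of the two branches are correct.

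Where you genuinely diverge is in your scrutiny of the negative-part term. The paper simply asserts that for $L>\e{X}$ one has $\e{(X^L-\e{X})^2\,\mathds{I}(X^L\leq\e{X})}=\e{(X-\e{X})^2\,\mathds{I}(X\leq\e{X})}$, and you are right to flag this: when $\e{X}\geq 0$, the event $\{X>L\}$ forces $X^L=0\leq\e{X}$ and contributes an extra $\e{X}^2\,\p{X>L}>0$ that the right-hand side omits. This is a real gap in the paper's proof of the displayed bound $c_{L,\beta}\leq c_\beta$, not a cosmetic one. However, your proposed reconciliation is off the mark. The hypothesis $m>\e{X}/(\beta c_\beta k)$ does not absorb $\e{X}^2\,\p{X>L}$; it only guarantees that the truncation levels used inside Theorem~\ref{thm:concentration-general} --- namely $L=mt\geq mt_{\max}(\beta)=m\beta c_\beta k$ in the first regime and $L=mt_{\max}(\beta)$ in the second --- satisfy $L>\e{X}$, which is the standing condition under which $c_{L,\beta}\leq c_\beta$ is invoked. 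The excess term does not vanish under that hypothesis. A clean patch is to enlarge the constant, e.g.\ bound $\e{X}^2\,\p{X>L}\leq\e{X}^2 e^{-kL}<\e{X}^2 e^{-k\e{X}}$ (or simply by $\e{X}_+^2$) and fold it into $c_\beta$; alternatively, one can observe that this term tends to $0$ as $L\to\infty$, but then the stated non-asymptotic inequality needs the augmented constant. Apart from this misattribution, your proposal is sound and recovers the paper's computation.
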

\begin{proof}
We would like to use Theorem \ref{thm:concentration-general} for proving the concentration. Toward this goal, we use Lemma \ref{lem:cl-upper-bound} to obtain an upper bound for $v \intoo{L, \beta}$. First note that, $\lambda = \beta \frac{I(L)}{L} = \beta k$. Hence, according to Lemma \ref{lem:cl-upper-bound} we have
\begin{eqnarray*}
\lefteqn{\e{ \intoo{Z^L}^2 \expp{\lambda {Z^L}} \mathds{I} \intoo{Z^L > 0}}} \nonumber \\
&\leq& 
\expp{{-}\beta k \e{X} }  \int_0^{L - \e{X}} \expp{- (1 - \beta) k(t + \e{X })} \intoo{2 t + \beta k t^2} dt \nonumber \\
&\leq& \expp{{-}\beta k \e{X} }  \int_0^{\infty} \expp{- (1 - \beta) k(t + \e{X })} \intoo{2 t + \beta k t^2} dt \nonumber \\
&=& \expp{-k \e{X}} \int_0^\infty \expp{- (1 - \beta) k t} \intoo{\beta k t^2 + 2 t} dt
\\ & = &
\expp{-k \e{X}} \intoo{ \frac{\beta k}{(1 - \beta)^3 k^3} \Gamma(3) + \frac{2}{(1 - \beta)^2 k^2} \Gamma(2) }
\\ & = &
\expp{-k \e{X}} \frac{2}{k^2} \frac{1}{(1 - \beta)^3}.
\end{eqnarray*}
We also have that if $L>\e{X}$, then 
\begin{eqnarray*}
\lefteqn{\e{\intoo{Z^L}^2 \mathds{I} \intoo{Z^L \leq 0}}  = \e{\intoo{X^L-\e{X}}^2 \mathds{I} \intoo{X^L \leq \e{X}}}} \nonumber \\
&=& \e{\intoo{X-\e{X}}^2 \mathds{I} \intoo{X \leq \e{X}}}. 
\end{eqnarray*}
\end{proof}

Our next example considers subWeibull distributions. 

\begin{coro} \label{coro:concentration-with-cl-bound-finite-sample}
Let $X$ be a centered random variable, i.e. $\e{X} = 0$, whose tail is captured by $c_\alpha \sqrt[\alpha]{t}$ for some $\alpha \geq 1$.  Moreover, assume $\e{X^2 \mathds{I}(X \leq 0)} = \sigma_-^2 < \infty$.  Then, we have
\begin{equation*}
v \intoo{L, \beta } \leq \bar{v}\intoo{L, \beta } \triangleq \sigma_-^2 + \frac{\Gamma(2 \alpha + 1)}{\intoo{(1 - \beta) c_\alpha}^{2 \alpha}}  + L^{\frac{1}{\alpha} - 1} \frac{\beta c_\alpha \Gamma(3 \alpha + 1)}{3 \intoo{(1 - \beta) c_\alpha}^{3 \alpha}} .
\end{equation*}
Hence, Theorem  \ref{thm:concentration-general} can be applied with $\bar{v} \intoo{mt, \beta}$ and  $t_{\max}(\beta) = \intoo{\beta \bar{v} \intoo{mt, \beta} c_\alpha}^{\frac{\alpha}{2 \alpha - 1}} m ^{- \frac{\alpha - 1}{2 \alpha - 1}}$.
\riii{Furthermore, using the bound in Remark \ref{rmk:one regime ineq} we obtain
\begin{equation*}
    \p{S_m > m t} \leq \expp{-\frac{m t^2}{2 \bar{v}\intoo{L, \beta }}} + \expp{- \beta \max(c_t, 0.5) c_\alpha \sqrt[\alpha]{mt}} + m \expp{-c_\alpha \sqrt[\alpha]{mt}}, \quad \forall t \geq 0, 
\end{equation*}
where $\beta < 1$ is arbitrary and $\bar{v}\intoo{L, \beta }$ is the upper bound of $v(mt, \beta)$, and $c_t = 1 - \frac{\beta \bar{v}\intoo{L, \beta } c_\alpha \sqrt[\alpha]{mt}}{2 m t^2}$.}
\end{coro}

\begin{proof}
Note that since $\alpha \geq 1$, $\frac{I(t)}{t}$ is indeed nonincreasing. We just need to apply Lemma \ref{lem:cl-upper-bound} with $I(t) = c_\alpha  \sqrt[\alpha]{t}$ to obtain
\begin{align*}
\int_0^L & \expp{- (1 - \beta) c_\alpha \sqrt[\alpha]{t}} \intoo{2 t + \beta c_\alpha L^{\frac{1}{\alpha} - 1} t^2} dt
\\ & \leq
\int_0^\infty \expp{-u} \intoo{\frac{2 u^\alpha}{\intoo{(1 - \beta) c_\alpha}^\alpha} + \frac{\beta c_\alpha L^{\frac{1}{\alpha} - 1} u^{2 \alpha}}{\intoo{(1 - \beta) c_\alpha}^{2 \alpha}}} \frac{\alpha u^{\alpha - 1}}{\intoo{(1 - \beta) c_\alpha}^\alpha} du
\\ & =
\frac{2 \alpha}{\intoo{(1 - \beta) c_\alpha}^{2 \alpha}} \Gamma(2 \alpha) + \frac{\beta c_\alpha L^{\frac{1}{\alpha} - 1} \alpha}{\intoo{(1 - \beta) c_\alpha}^{3 \alpha}} \Gamma(3 \alpha)
\\ & =
\frac{\Gamma(2 \alpha + 1)}{\intoo{(1 - \beta) c_\alpha}^{2 \alpha}}  + L^{\frac{1}{\alpha} - 1} \frac{\beta c_\alpha \Gamma(3 \alpha + 1)}{3 \intoo{(1 - \beta) c_\alpha}^{3 \alpha}}.
\end{align*}
Finally, it is straightforward to note that
\[
\e{(X^L)^2  \mathds{I}(X^L \leq 0)} \leq \e{X^2 \mathds{I}(X \leq 0)}. 
\]
\end{proof}

In our last example, we consider random variables with polynomially decaying tails. 

\begin{coro} \label{coro:cl-upper-bound-poly-tail}
Let $X$ be a centered random variable, i.e. $\e{X} = 0$, whose tail is captured by $\gamma \log t$, where $\gamma > 2$. Moreover, assume $\e{X^2 \mathds{I}(X \leq 0)} = \sigma_-^2 < \infty$. Then, we have
\begin{equation} \label{eq:cl-upper-bound-poly-tail}
v \intoo{L, \beta} \leq \bar{v} \intoo{L, \beta} \define
\begin{dcases}
 \sigma_-^2 + L^\frac{\gamma \beta}{L} +   \frac{2 - \frac{\gamma \beta }{2 - \gamma(1 - \beta)}}{2 - \gamma(1 - \beta)} \intoo{L^{2 - \gamma(1 - \beta)} - 1}+ \frac{\gamma \beta L^{2 - (1 - \beta) \gamma} \log L}{2 - (1 - \beta) \gamma},
 &
\beta \neq 1 - \frac{2}{\gamma}, 
\\
\sigma_-^2 + L^\frac{\gamma - 2}{L} + 2 \log L + \frac{\gamma - 2}{2} \intoo{\log L}^2, &
\beta = 1 - \frac{2}{\gamma}.  
\end{dcases}
\end{equation}

\riii{
Setting $\bar{v}= \bar{v}(mt, \beta)$ in Remark \ref{rmk:one regime ineq}, we have that for any $\beta < 1$:

\begin{equation}
    \label{eq:coro poly tail bound with bar v}
    \p{S_m > mt} \leq \expp{- \frac{mt^2}{2 \Bar{v}}} + \frac{1}{(mt)^{
    \beta \max(c_t, 0.5) \gamma}} + \frac{m}{(mt)^{\gamma}},
    \qquad \forall t \geq 0
\end{equation}
where $c_t = 1 - \frac{\beta \Bar{v} \gamma \log(mt)}{2 mt^2}$.
Note that for a fixed $\beta < 1 - \frac{2}{\gamma}, \; \Bar{v}$ remains bounded as $mt \to \infty$, and for $\beta < 1 - \frac{1}{\gamma}$, while $\Bar{v}$ grows, the first term of \eqref{eq:coro poly tail bound with bar v} remains negligible compared to the other polynomial terms.   Note that Theorem \ref{thm:concentration-general} also offers an upper bound of form 
\begin{equation}
    \label{eq:our bound in poly tail simplified}
    \p{S_m > mt} \leq \frac{1}{(mt)^{c_t \beta \gamma}} + \frac{m}{(mt)^\gamma}, \qquad t \geq t_{\max}(\beta).
\end{equation}
}

\vspace{.25 cm}
\begin{proof}
Note that
\begin{eqnarray*}
\lefteqn{\e{\intoo{X^L}^2 \expp{\lambda X^L} \mathds{I}(X^L > 0)}  =
\e{\intoo{X^L}^2 \expp{\lambda X^L} \mathds{I}(0 < X^L \leq 1)} }
\\ & \;\;\; +&
\e{\intoo{X^L}^2 \expp{\lambda X^L} \mathds{I}(X^L > 1)}
\\ & \leq&
\expp{\beta \gamma \frac{\log(L)}{L}} + \e{\intoo{X^L}^2 \expp{\lambda X^L} \mathds{I}(X^L > 1)}
\\ & =&
L^{\frac{\gamma \beta}{L}} + \e{\intoo{X^L}^2 \expp{\lambda X^L} \mathds{I}(X^L > 1)}.
\end{eqnarray*}
Thus, for $\beta \neq 1 - \frac{2}{\gamma}$, using the upper bound given in Lemma \ref{lem:cl-upper-bound}, we just need to show
\begin{equation} 
\int_1^{L} \expp{- (1 - \beta) \gamma \log t}\intoo{2t + \beta \gamma t \log t} dt =
\frac{2 - \frac{\gamma \beta }{2 - \gamma(1 - \beta)}}{2 - \gamma(1 - \beta)} \intoo{L^{2 - \gamma(1 - \beta)} - 1}+ \frac{\gamma \beta L^{2 - (1 - \beta) \gamma} \log L}{2 - (1 - \beta) \gamma}.
\end{equation}
Toward this goal, note that
\begin{eqnarray*}
\lefteqn{\int_1^{L} \expp{- (1 - \beta) \gamma \log t}\intoo{2t + \beta \gamma t \log t} dt  =
\int_1^{L} t^{1 - (1 - \beta) \gamma}\intoo{2 + \beta \gamma  \log t} dt}
\\  &=&
\frac{t^{2 - (1 - \beta) \gamma}}{2 - (1 - \beta) \gamma } \intoo{2 + \beta \gamma \intoo{-\frac{1}{2 - (1 - \beta) \gamma} + \log t}} \sVert[4]_1^L
\\ & = &
\frac{2 - \frac{\gamma \beta }{2 - \gamma(1 - \beta)}}{2 - \gamma(1 - \beta)} \intoo{L^{2 - \gamma(1 - \beta)} - 1}+ \frac{\gamma \beta L^{2 - (1 - \beta) \gamma} \log L}{2 - (1 - \beta) \gamma}.
\end{eqnarray*}
In the above equality, we are using $\int t^k = \frac{1}{k + 1} t^{k + 1}$ and $\int t^k \log t = \intoo{- \frac{1}{(k + 1)^2} + \frac{\log t}{k + 1}} t^{k + 1}$. \\

\noindent For $\beta = 1 - \frac{2}{\gamma} $ we have $1 - (1 - \beta) \gamma = -1$ and $\gamma \beta = \gamma - 2$. Hence
\begin{eqnarray*}
\lefteqn{\int_1^{L} \expp{- (1 - \beta) \gamma \log t}\intoo{2t + \beta \gamma t \log t} dt  =
\int_1^{L} t^{-1}\intoo{2 + (\gamma - 2)  \log t} dt}
\\  &= &
2 \log t + \frac{\gamma - 2}{2} \intoo{\log t}^2 \sVert[4]_1^L = 2 \log L + \frac{\gamma - 2}{2} \intoo{\log L}^2,
\end{eqnarray*}
which concludes the proof.
\end{proof}
\end{coro}

\begin{rem} 
Note that $\beta < 1 - \frac{2}{\gamma}$ is equivalent to $2 - (1 - \beta) \gamma < 0$. Hence, the right hand side of  \eqref{eq:cl-upper-bound-poly-tail} remains bounded when $L$ grows to infinity. By letting $\beta$ get closer to $0$ we can cover any $\gamma>2$. Hence, we can obtain a concentration inequality for the sum of independent and identically distributed random variables with polynomially decaying tails as long as $P(X>t) < \frac{1}{t^\gamma}$ for some $\gamma >2$. 
\end{rem}

Let us try to find another bound for $v \intoo{L,\beta}$ for the distributions we discussed in Corollaries \ref{coro:concentration-with-cl-bound-finite-sample} and \ref{coro:cl-upper-bound-poly-tail}. These bounds enable us to obtain another concentration result that is in some sense  sharper than the one we derived above and shows the flexibility of our framework. 

\begin{lem} \label{lem:limit-of-c}
Suppose that ${\rm var}(X) < \infty$ and the right tails of random variables $X$ is captured by $I(t)$.  Suppose that $I(t)$ and $\beta$ satisfy one of the following conditions:
\begin{enumerate}[(a)]
\item $I(t) = I_\alpha(t) = c_\alpha \sqrt[\alpha]{t}$ for $\alpha >1$ and $\beta < 1,$

\item $I(t) = \gamma \log t$ for $\gamma > 2$ and $\beta < 1 - \frac{2}{\gamma}$.

\end{enumerate}
Then, \milad{for $0 \leq \lambda_{L, \beta} \leq \beta \frac{I(L)}{L}$}  we have
\begin{equation*}
\lim_{L \to \infty} \e{\intoo{X^L - \e{X}}^2 \intoo{ \mathds{I} \intoo{X^L \leq \e{X}} + \expp{\lambda_{L, \beta} \intoo{X^L - \e{X} }} \mathds{I} \intoo{X^L > \e{X}} } }  = {\rm Var}(X).
\end{equation*}
\end{lem}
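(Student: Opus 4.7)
The plan is to invoke the dominated convergence theorem on the random integrand
\[
H_L(X) \define (X^L - \e{X})^2 \bigl[\mathds{I}(X^L \le \e{X}) + \expp{\lambda_{L,\beta}(X^L - \e{X})}\,\mathds{I}(X^L > \e{X})\bigr].
\]
Pointwise, $X^L \to X$ almost surely as $L \to \infty$, and in both case (a) (where $I(L)/L = c_\alpha L^{1/\alpha - 1} \to 0$ since $\alpha > 1$) and case (b) (where $\gamma \log L / L \to 0$), we have $\lambda_{L,\beta} \to 0$. Hence the bracketed factor tends to $1$ and $H_L(X) \to (X - \e{X})^2$ almost surely, with target expectation $\mathrm{Var}(X)$.

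The crux is producing an integrable majorant uniform in $L$. Since $I(t)/t$ is nonincreasing in both cases, on $\{X^L > \e{X}\} \cap \{X \le L\}$ (where $X^L = X$) monotonicity gives
\[
\lambda_{L,\beta}(X^L - \e{X}) \;\le\; \beta \frac{I(L)}{L}\, X \;\le\; \beta I(X);
\]
the corner case $X > L$ forces $X^L = 0$, so the exponent collapses to $-\lambda_{L,\beta}\,\e{X}$, which is uniformly bounded as $L \to \infty$. Combined with the trivial bound $(X^L - \e{X})^2 \le 2X^2 + 2\,\e{X}^2$ on $\{X^L \le \e{X}\}$, one finds that $H_L(X)$ is dominated for every $L$ by a constant multiple of
\[
g(X) \define (X^2 + 1)\,\mathds{I}(X \le \e{X}) + (X - \e{X})^2\, \expp{\beta I(X)}\,\mathds{I}(X > \e{X}).
\]

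It remains to verify $\e{g(X)} < \infty$. The first summand is controlled by $\mathrm{Var}(X) < \infty$. For the second, tail integration by parts against $\p{X>t} \le \expp{-I(t)}$ (whose boundary term at infinity vanishes since $\beta < 1$) reduces the task to bounding
\[
\int_{\e{X}}^{\infty} \bigl(2(t-\e{X}) + \beta(t-\e{X})^2 I'(t)\bigr)\, \expp{-(1-\beta)I(t)}\, dt.
\]
In case (a), $I(t) = c_\alpha t^{1/\alpha}$ provides stretched-exponential decay that dominates every polynomial prefactor for any $\beta < 1$. In case (b), $I(t) = \gamma \log t$ makes the integrand behave like $t^{1-(1-\beta)\gamma}$ at infinity, integrable precisely when $(1-\beta)\gamma > 2$, i.e., $\beta < 1 - 2/\gamma$ — exactly the hypothesis. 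Dominated convergence then delivers the lemma.

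The delicate step is the integrability check in case (b): the threshold $\beta < 1 - 2/\gamma$ is sharp, so any cruder bound on $\expp{\lambda_{L,\beta}(X^L - \e{X})}$ (e.g.\ the trivial $\expp{\beta I(L)}$) would fail. Exploiting monotonicity of $I(t)/t$ to fold the growing exponent into the $L$-independent function $\expp{\beta I(X)}$ is what makes the argument work.
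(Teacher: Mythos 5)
Your proof is correct and follows essentially the same route as the paper: both invoke dominated convergence, both obtain the pointwise limit $(X - \e{X})^2$ from $\lambda_{L,\beta} \to 0$ and $X^L \to X$ a.s., and both construct an $L$-independent majorant by folding $\lambda_{L,\beta}\,X^L = \beta\,\tfrac{I(L)}{L}\,X^L$ into $\beta\,I(X)$ via monotonicity of $I(t)/t$ and $X \le L$, then verify integrability by tail integration against $\p{X>t}\le\expp{-I(t)}$. The only difference is cosmetic: the paper writes its dominating variable with $\max(X,0)$ inside $I$ and carries an explicit extra $+1$ in the exponent to absorb the term $-\lambda_{L,\beta}\,\e{X}$ (which can be positive when $\e{X}<0$), whereas you note the boundedness of that term informally; to be airtight your $g(X)$ should carry a constant multiplicative factor $\expp{C}$ with $C \ge \sup_{L\ge L_0}(-\lambda_{L,\beta}\,\e{X})$, which is finite for any $L_0>0$ since $\lambda_{L,\beta}\to 0$.
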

The proof of this lemma is presented in Section \ref{ssec:proof:lem1}. This lemma implies that if $L$ is large enough, then we should expect $v \intoo{L,\beta}$ to be very close to ${\rm Var}(X)$. So, assuming $mt$ is large enough we can obtain a more accurate concentration result. 

\begin{coro}\label{coro:weibul}
Suppose that the right tails of independent and identically distributed random variables $X_1, X_2, \ldots, X_m$ are captured by $c_\alpha \sqrt[\alpha]{t}$ for $\alpha >1$, and $Var(X_i) = \sigma^2$. Define $S_m = \sum\limits_{i \leq m} X_i$. Then, for any $0 < \beta < 1$ and $\epsilon > 0$, there is a constant $C_\epsilon$ such that for all $mt > C_\epsilon$

\begin{equation}
\p{S_m - \e{S_m} > mt} \leq 
\begin{dcases}
\expp{-c_t \beta c_\alpha \sqrt[\alpha]{mt}} + m \expp{-c_\alpha \sqrt[\alpha]{mt}},
&
t > t_{\max},
\\
\expp{- \frac{m t^2}{2 \intoo{\sigma^2 + \epsilon}} } + m \expp{- \frac{m t_{\max} ^2}{\beta( \sigma^2 + \epsilon )}},
&
t \leq t_{\max}, 
\end{dcases}
\end{equation}
where $t_{\max} = \intoo{\beta (\sigma^2 + \epsilon) c_\alpha}^{\frac{\alpha}{2 \alpha - 1}} {m}^{- \frac{\alpha - 1}{2 \alpha - 1}}$ and $c_t = 1 - \frac{1}{2} \beta (\sigma^2 + \epsilon) c_\alpha m^{\frac{1}{\alpha} - 1} t^{\frac{1}{\alpha} - 2} $ varies between $\frac{1}{2}$ and $1$.
\end{coro}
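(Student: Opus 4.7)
The plan is to combine Lemma~\ref{lem:limit-of-c}(a) with Theorem~\ref{thm:concentration-general} via Remark~\ref{rem:concentration-thm-with-upper-bound-for-c}. Since $\alpha > 1$, $\beta \in (0,1)$ and $\mathrm{Var}(X) = \sigma^2 < \infty$, hypothesis (a) of Lemma~\ref{lem:limit-of-c} is satisfied and yields $\lim_{L \to \infty} c_{L,\beta} = \sigma^2$. Consequently, for every $\epsilon > 0$ there exists a constant $C_\epsilon$ such that $c_{L,\beta} \leq \sigma^2 + \epsilon$ for all $L \geq C_\epsilon$; this will serve as the common uniform upper bound required in both regimes of the concentration inequality.

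Next I would fix $mt > C_\epsilon$ and apply Theorem~\ref{thm:concentration-general} with $c_{mt,\beta}$ replaced by $\sigma^2 + \epsilon$, as Remark~\ref{rem:concentration-thm-with-upper-bound-for-c} explicitly permits. In the heavy-tail regime $t \geq t_{\max}(\beta)$ this produces directly the bound $\exp\bigl(-c_t \beta c_\alpha (mt)^{1/\alpha}\bigr) + m \exp\bigl(-c_\alpha (mt)^{1/\alpha}\bigr)$. In the sub-Gaussian regime $t < t_{\max}(\beta)$, Theorem~\ref{thm:concentration-general} uses the constant $c_{m t_{\max},\beta}$ instead; but since $m t_{\max} \geq mt > C_\epsilon$ under our hypothesis, the same replacement remains valid there as well.

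It then remains to compute the explicit forms of $t_{\max}$ and $c_t$ corresponding to the new constant $c := \sigma^2 + \epsilon$. The defining equation $t = \beta c\, I(mt)/(mt) = \beta c\, c_\alpha (mt)^{1/\alpha - 1}$ rearranges to $t^{2 - 1/\alpha} = \beta c\, c_\alpha\, m^{1/\alpha - 1}$, which gives the stated $t_{\max} = \bigl(\beta(\sigma^2 + \epsilon) c_\alpha\bigr)^{\alpha/(2\alpha - 1)} m^{-(\alpha - 1)/(2\alpha - 1)}$. Substituting the same $c$ into $c_t = 1 - \tfrac{1}{2}\frac{\beta c}{t}\frac{I(mt)}{mt}$ yields $c_t = 1 - \tfrac{1}{2}\beta(\sigma^2 + \epsilon) c_\alpha\, m^{1/\alpha - 1} t^{1/\alpha - 2}$, and the constraint $t \geq t_{\max}$ is precisely what forces $c_t \in [\tfrac12, 1]$.

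The main—and essentially only nontrivial—point is ensuring that a single uniform bound from Lemma~\ref{lem:limit-of-c} simultaneously controls the regime-one constant $c_{mt,\beta}$ and the regime-two constant $c_{m t_{\max},\beta}$. This is automatic once one observes that in the regime $t < t_{\max}$ the hypothesis $mt > C_\epsilon$ propagates to $m t_{\max} > C_\epsilon$. Everything beyond this is algebraic bookkeeping already carried out in Corollary~\ref{coro:concentration-with-cl-bound-finite-sample}.
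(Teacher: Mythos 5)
Your proposal is correct and follows essentially the same route as the paper: invoke Lemma~\ref{lem:limit-of-c}(a) to get $c_{L,\beta}\le \sigma^2+\epsilon$ for $L>C_\epsilon$, substitute this constant into Theorem~\ref{thm:concentration-general} as Remark~\ref{rem:concentration-thm-with-upper-bound-for-c} allows, and read off $t_{\max}$ and $c_t$. Your explicit check that $m t_{\max}\ge mt > C_\epsilon$ in the sub-Gaussian regime is a small point the paper leaves implicit, but it is the same argument.
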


\begin{proof}
Note that, by Lemma \ref{lem:limit-of-c}, for any given $\epsilon > 0$ we can find a positive constant $C_\epsilon$, such that
\begin{equation*}
\e{\intoo{X^L - \e{X}}^2 \intoo{ \mathds{I} \intoo{X^L \leq \e{X}} + \expp{\lambda_{L, \beta} \intoo{X^L - \e{X} }} \mathds{I} \intoo{X^L > \e{X}} } }  \leq \sigma^2 + \epsilon, \quad \forall L > C_\epsilon.
\end{equation*}
Hence, for all $mt > C_\epsilon$, Theorem \ref{thm:concentration-general} is applicable with $v \intoo{L,\beta} = \sigma^2 + \epsilon$.  The corollary follows by substituting this $v \intoo{L,\beta}$ and $I(t) = c_\alpha \sqrt[\alpha]{t}$ in Theorem \ref{thm:concentration-general}.
\end{proof}

\begin{rem}
According to Corollary \ref{coro:weibul}, if $C_\epsilon < mt \leq m t_{\max}$, then $\p{S_m - \e{S_m} > mt}$ is upper bounded by $\expp{- \frac{m t^2}{2 \intoo{\sigma^2 + \epsilon}} } + m \expp{- \frac{m t_{\max} ^2}{\beta( \sigma^2 + \epsilon )}}$. Note that  $\expp{- \frac{m t^2}{2 \intoo{\sigma^2 + \epsilon}} } $ is very close to the term that appears in the central limit theorem. Furthermore, if $mt> t_{\max}$, then $\p{S_m - \e{S_m} > mt}$ is bounded from above by $\expp{-c_t \beta c_\alpha \sqrt[\alpha]{mt}} + m \expp{-c_\alpha \sqrt[\alpha]{mt}}$. Again we will show in the next section that this bound is sharp. Hence, an accurate bound for $v \intoo{L, \beta}$ results in an accurate concentration result.
\end{rem}

\begin{rem}
Using part (b) of Lemma \ref{lem:limit-of-c}, a corollary similar to Corollary \ref{coro:weibul} can be also written for $I(t) = \gamma \log t$ with $\gamma> 2$. For the sake of brevity, we do not repeat this corollary. Hence, Theorem \ref{thm:concentration-general} can be used to obtain concentration results as long as $I(t) > \gamma \log t$ with $\gamma> 2$ (for large enough values of $t$).  Note that if $I_{br}(t) = \gamma \log t$ for $\gamma<2$, then the variance of the random variable is unbounded. This is the region in which the sum of independent and identically distributed random variables does not converge to a Gaussian and it converges to other stable distributions (See Chapter 1 of \cite{nolan2003stable}). We leave the study of the concentration of sums of such random variables to future research.
\end{rem}

\rii{In order to illustrate the behavior of our upper bounds, we plot them for $X_i \sim \mathcal{N}(0, 1)^\kappa$ for $\kappa= 4, 5, 10$.  Note that in this case we have $\alpha = \kappa / 2$ and $I(t) = \frac{1}{2} \sqrt[\alpha]{t} - \log(2)$ is a rate capturing function. When $m t$ is large enough $I(m t)$ can be approximated with $\Tilde{I}(m t) = \frac{1}{2} \sqrt[\alpha]{m t}$.  Therefore with the terminology of Corollary \ref{coro:weibul} we have $c_\alpha = \frac{1}{2}$.  Below, we plot the upper bound functions in the asymptotic case of $\beta = c_t = 1, \epsilon = 0$ (the red curve is the plot of $\expp{- \frac{m t^2}{2 {\sigma^2 }} } + m \expp{- \frac{m t_{\max} ^2}{ \sigma^2}}$ and the blue curve is the plot of $(1 + m) \expp{-\frac{1}{2} \sqrt[\alpha]{mt}}$ which are the asymptotic limits of bounds offered by Corollary \ref{coro:weibul}).  Hence, $t_{\max} = \intoo{\frac{\sigma^2}{2}}^{\frac{\alpha - 1}{2 \alpha - 1}}$, where $\sigma^2 = {\rm Var}(\mathcal{N}(0, 1)^\kappa)$.  In addition to the upper bounds given by Corollary \ref{coro:weibul}, we sampled $10^7$ copies of $S_m = \sum\limits_{i=1}^m X_i$ and plotted the histogram of this data to achieve a numerical approximation for the tail and compare it with what our theory offers. 
}

\begin{figure}[h!]  
    \centering
    \begin{subfigure}[b]{0.3\textwidth}
    \includegraphics[width = \textwidth]{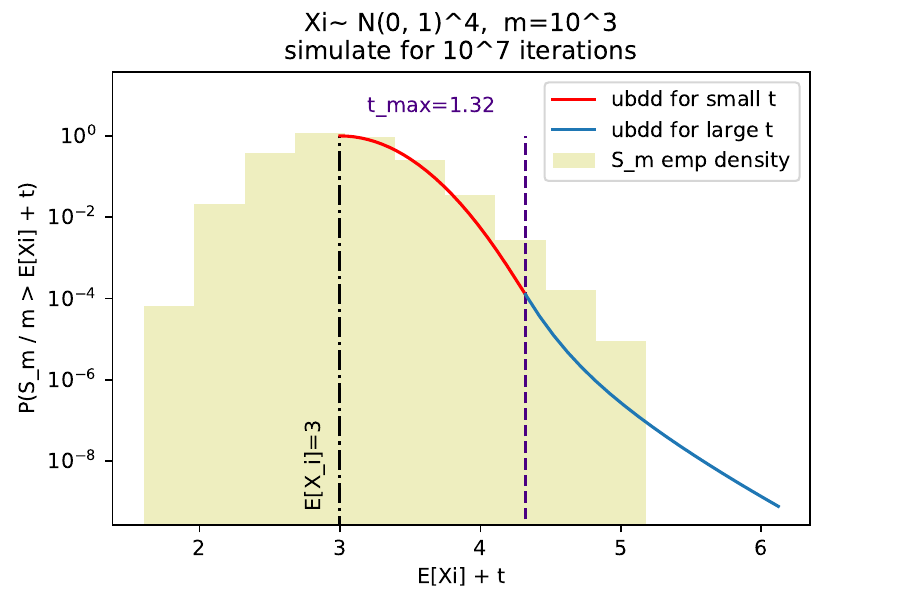}
    \end{subfigure}
    \begin{subfigure}[b]{0.3\textwidth}
    \includegraphics[width = \textwidth]{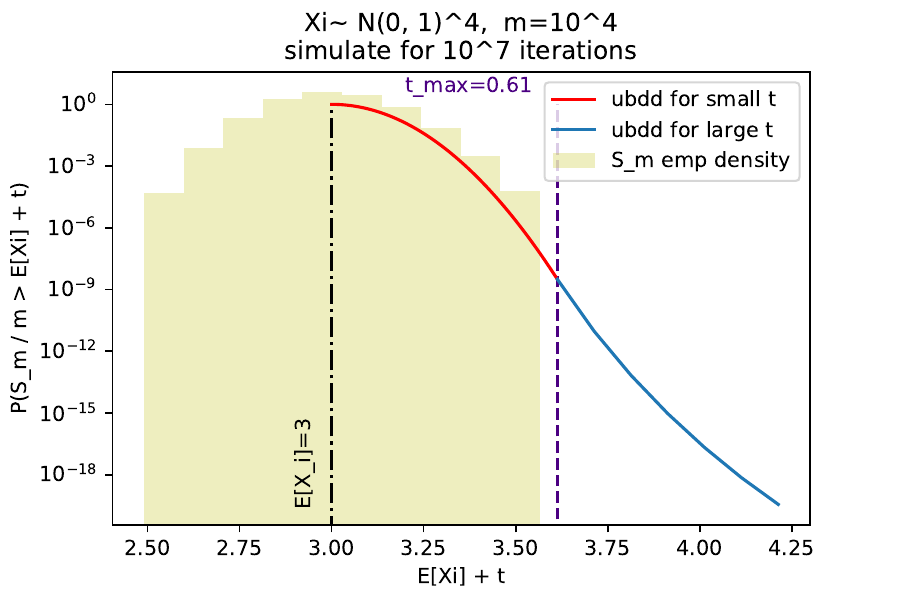}
    \end{subfigure}
    \begin{subfigure}[b]{0.3\textwidth}
    \includegraphics[width = \textwidth]{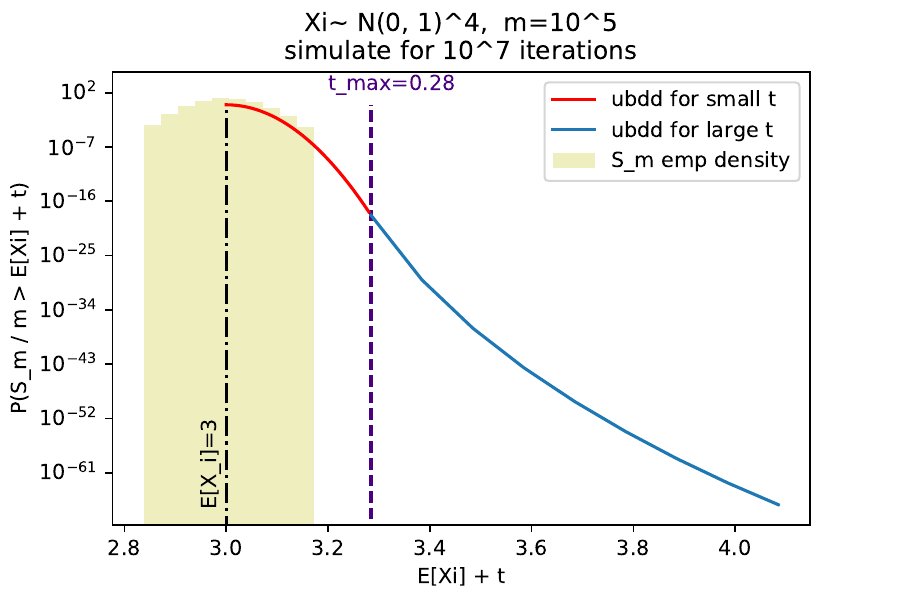}
    \end{subfigure}
    
    \begin{subfigure}[b]{0.3\textwidth}
    \includegraphics[width = \textwidth]{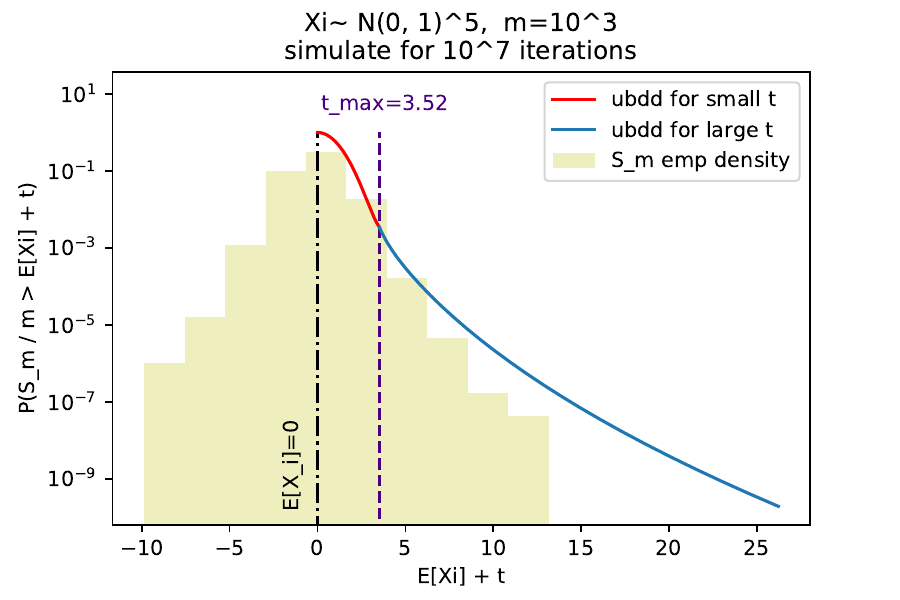}
    \end{subfigure}
    \begin{subfigure}[b]{0.3\textwidth}
    \includegraphics[width = \textwidth]{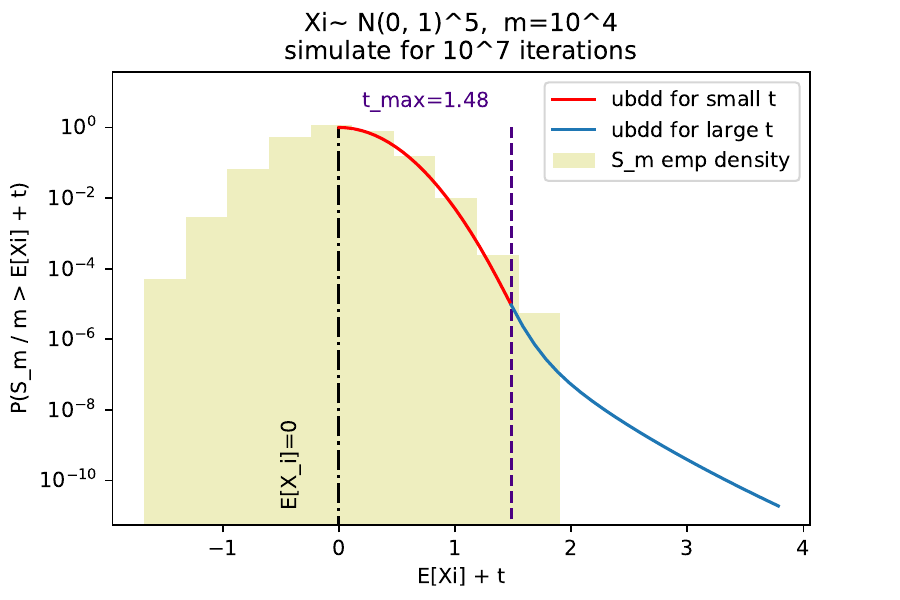}
    \end{subfigure}
    \begin{subfigure}[b]{0.3\textwidth}
    \includegraphics[width = \textwidth]{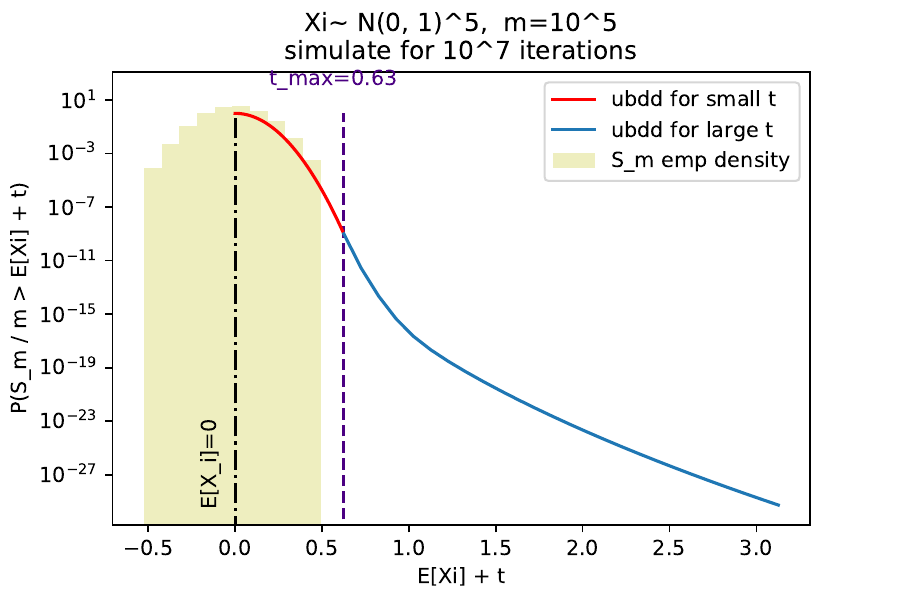}
    \end{subfigure}
    
    \begin{subfigure}[b]{0.3\textwidth}
    \includegraphics[width = \textwidth]{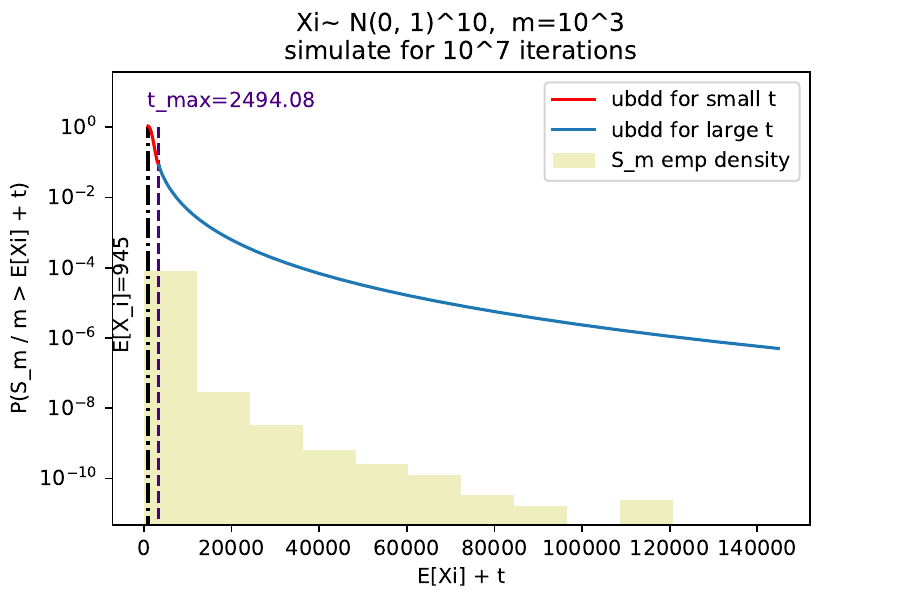}
    \end{subfigure}
    \begin{subfigure}[b]{0.3\textwidth}
    \includegraphics[width = \textwidth]{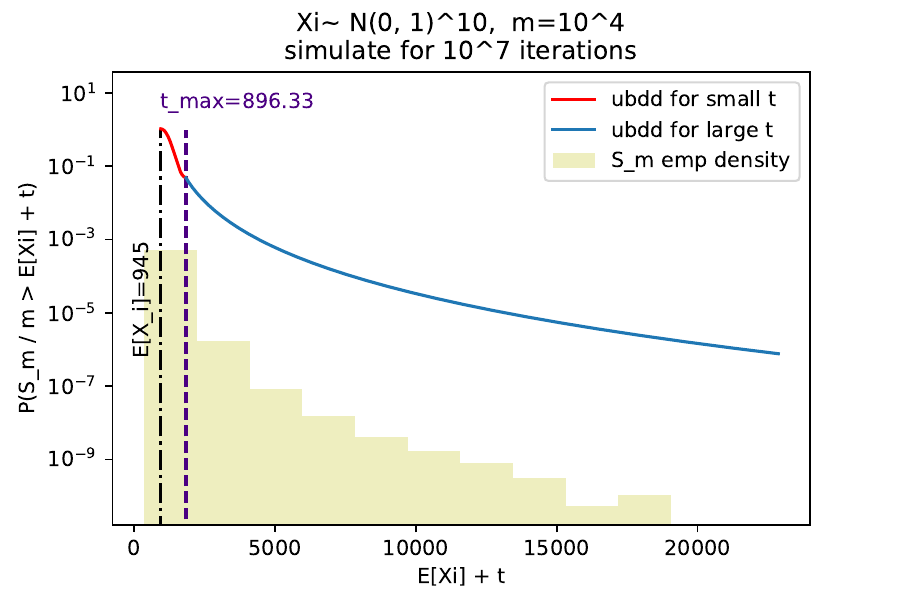}
    \end{subfigure}
    \begin{subfigure}[b]{0.3\textwidth}
    \includegraphics[width = \textwidth]{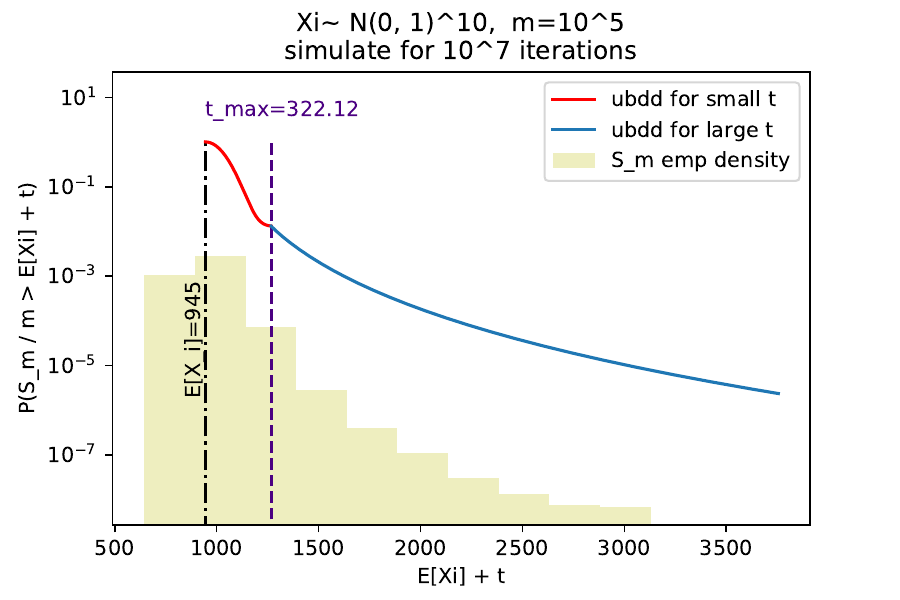}
    \end{subfigure}
    
    \label{fig:simulations}
    \caption{\centering Upper bounds in Corollary \ref{coro:weibul} for $X \sim \mathcal{N}(0, 1)^\kappa$,  $\kappa = 4, 5, 10$ and $m = 10^3, 10^4, 10^5$, 
    red curve is $\expp{- \frac{m t^2}{2 {\sigma^2 }} } + m \expp{- \frac{m t_{\max} ^2}{ \sigma^2}}$, blue curve is $(1 + m) \expp{-\frac{1}{2} \sqrt[\alpha]{mt}}$, 
    $t_{\max} = \intoo{\frac{\sigma^2}{2}}^{\frac{\alpha - 1}{2 \alpha - 1}}$}
\end{figure}

\subsection{Large deviation} \label{sec:large deviation}

In this section, as a simple byproduct of what we have proved for obtaining concentration bounds and also evaluating the sharpness of our results, we study the large deviation properties of the sums of independent and identically distributed random variables. Towards this goal, we consider the limiting version of Definition \ref{def:tail capture} in which the exact rate of decay of the tail is captured by $I(t)$. 

\begin{defn} \label{def:tail capture_l}
Let $I: \mathds{R} \to \mathds{R}$ denote an increasing function. We say $I$ captures the right tail of random variable $X$ in the limit if
\begin{equation} \label{eq:right-tail-capture_l}
\lim_{t \to \infty} \frac{- \log \intoo{\p{X > t}} }{I(t)} = 1.
\end{equation}
We say a random variable is superexponential if its tail is captured in limit by a function $I$ such that $I(t) = o(t)$ as $t\rightarrow \infty$.
\end{defn}

Note that if the basic right tail capturing function satisfies $I_{br}(t) = o(t)$, then the moment generating function of the distribution is infinity for $\lambda \in \intoo{0, \infty}$. Hence, Cramer's theorem is not useful. Our next theorem offers a sharp large deviation result for superexponential random variables.

\begin{thm}[General Large Deviation] \label{thm:ld-general}
Suppose that $X_1, X_2, \ldots, X_m$ are superexponential random variables with finite variance whose tails are captured in the limit by $I(t)$.  Furthermore, suppose that $I$ is an increasing function and $ \lim\limits_{t \to \infty} \frac{\log(t)}{I(t)} = 0$. Finally, let $\gamma_m$ be an increasing sequence of real numbers that satisfy
\begin{equation}
\log m \ll I(\gamma_m) \ll \frac{\gamma_m^2}{m}.\footnote{$f(t) \ll g(t)$ means that $f(t) = o(g(t))$ as $t \rightarrow \infty$. }
\end{equation}
  If \eqref{eq:bdd by c assumption} remains bounded for $X_1$ and  for all $\beta < 1$, then 
\begin{equation}
\lim_{m \to \infty} \frac{ - \log \p{S_m - \e{S_m} > \gamma_m} }{I(\gamma_m)} = 1.
\end{equation}
\end{thm}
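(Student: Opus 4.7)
The plan is to prove matching upper and lower bounds on $P(S_m - E[S_m] > \gamma_m)$. The upper bound follows from Theorem~\ref{thm:concentration-general}, while the lower bound is supplied by the classical ``one big jump'' principle: a single summand carries essentially all of the excess, while the remaining $m-1$ summands stay close to their mean by Chebyshev's inequality.

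\emph{Upper bound ($\liminf \geq 1$).} Fix $\beta \in (0,1)$ and apply Theorem~\ref{thm:concentration-general} with $t = \gamma_m/m$, so $mt = \gamma_m$. Since $c_{L,\beta}$ is bounded in $L$ by hypothesis and $I(\gamma_m) \ll \gamma_m^2/m$, the inequality defining $t_{\max}$ read at $t = \gamma_m/m$, namely $\gamma_m^2/m \leq \beta c_{\gamma_m,\beta}\, I(\gamma_m)$, fails for large $m$; combined with the (eventual) monotonicity of $I(s)/s$ that is typical of super-exponential $I$, this places us in the first regime of \eqref{eq:concentration ineq}:
\begin{equation*}
P(S_m - E[S_m] > \gamma_m) \leq \exp\bigl(-c_t \beta I(\gamma_m)\bigr) + m \exp\bigl(-I(\gamma_m)\bigr).
\end{equation*}
The Gaussian correction $c_t = 1 - \tfrac{1}{2}\beta c_{\gamma_m,\beta}\, m I(\gamma_m)/\gamma_m^2$ tends to $1$ by the same assumption, and the second term equals $\exp(\log m - I(\gamma_m)) = \exp(-(1-o(1))I(\gamma_m))$ because $\log m \ll I(\gamma_m)$. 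Combining, $\liminf_m -\log P(S_m - E[S_m] > \gamma_m)/I(\gamma_m) \geq \beta$, and letting $\beta \uparrow 1$ completes this direction.

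\emph{Lower bound ($\limsup \leq 1$).} Write $Y_i = X_i - E[X_i]$. For any $\epsilon > 0$, independence gives
\begin{equation*}
P(S_m - E[S_m] > \gamma_m) \geq P\bigl(Y_1 > (1+\epsilon)\gamma_m\bigr)\, P\Bigl(\sum_{i \geq 2} Y_i > -\epsilon \gamma_m\Bigr).
\end{equation*}
By Definition~\ref{def:tail capture_l}, the first factor is at least $\exp\bigl(-(1+\delta)\, I((1+\epsilon)\gamma_m + E[X_1])\bigr)$ for any $\delta > 0$ and all large $m$. For the second factor, Chebyshev yields $P(|\sum_{i \geq 2} Y_i| > \epsilon \gamma_m) \leq (m-1)\sigma^2/(\epsilon \gamma_m)^2 \to 0$, because $\gamma_m^2/m \to \infty$ (both $I(\gamma_m) \to \infty$ and $\gamma_m^2/m \gg I(\gamma_m)$ are forced by $\log m \ll I(\gamma_m) \ll \gamma_m^2/m$). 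Taking $-\log$, dividing by $I(\gamma_m)$, and sending $\delta \to 0$ then $\epsilon \to 0$ yields $\limsup \leq 1$ provided $I((1+\epsilon)\gamma_m)/I(\gamma_m) \to 1$ as $\epsilon \to 0$, a mild regularity property satisfied by every canonical rate of interest (for $I(t) = c_\alpha t^{1/\alpha}$ the ratio is $(1+\epsilon)^{1/\alpha} \to 1$, and for $I(t) = \gamma \log t$ it is $1 + \log(1+\epsilon)/\log \gamma_m \to 1$).

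\emph{Main obstacle.} No individual estimate is technically hard; the subtlety is scale calibration. The hypothesis $\log m \ll I(\gamma_m) \ll \gamma_m^2/m$ is engineered to accomplish three things simultaneously: (i) place $t = \gamma_m/m$ inside the heavy-tail regime of Theorem~\ref{thm:concentration-general} with Gaussian correction $c_t \to 1$, so the exponent of the Chernoff bound is $(1-o(1))\beta I(\gamma_m)$ rather than something smaller; (ii) absorb the combinatorial pre-factor $m$ produced by the union bound over individual jumps into the exponent as $o(I(\gamma_m))$; and (iii) preserve Chebyshev control of the remainder in the one-jump lower bound at the scale $\epsilon \gamma_m$. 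Verifying all three under one set of assumptions is precisely what makes the exponent $I(\gamma_m)$ tight on both sides.
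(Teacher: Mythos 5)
Your overall strategy is the same as the paper's --- truncation plus an MGF bound for the upper bound on the probability, and a ``one big jump'' construction for the lower bound --- but there is a genuine (if easily repairable) gap in your lower bound.

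\textbf{Lower bound.} You decompose on the event $\{Y_1 > (1+\epsilon)\gamma_m\} \cap \{\sum_{i\geq 2}Y_i > -\epsilon\gamma_m\}$, control the second factor by Chebyshev, and then must conclude. This forces you to evaluate $I$ at $(1+\epsilon)\gamma_m$ instead of at $\gamma_m$, and you correctly note that closing the argument requires $I((1+\epsilon)\gamma_m)/I(\gamma_m)\to 1$. That regularity is \emph{not} among the hypotheses of the theorem. Nothing in ``increasing, $I(t)=o(t)$, $\log t/I(t)\to 0$'' rules out an $I$ whose ratio $I((1+\epsilon)t)/I(t)$ fails to converge to $1$, so as stated your proof establishes a weaker result than the theorem claims. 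The paper avoids this entirely by decomposing instead on
\[
\{X_m > \gamma_m\}\cap\{S_{m-1}-\e{S_{m-1}}\geq \e{X}\},
\]
which gives $S_m-\e{S_m}>\gamma_m$ exactly, and then bounds the second factor from below by a fixed constant $C>0$ for large $m$ using the CLT (since $\e{X}/\sqrt{m-1}\to 0$). With this decomposition one only ever evaluates $I$ at $\gamma_m$ itself, so no regularity of $I$ beyond the stated hypotheses is needed.

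\textbf{Upper bound.} Your idea is right and matches the paper's, but your appeal to Theorem~\ref{thm:concentration-general} via the $t>t_{\max}$ dichotomy introduces an avoidable subtlety: showing the inequality defining $t_{\max}$ fails \emph{at} $t=\gamma_m/m$ does not by itself place $\gamma_m/m$ above the supremum $t_{\max}$, which is why you found yourself invoking ``eventual monotonicity of $I(s)/s$'' --- another unstated assumption. The paper sidesteps the $t_{\max}$ classification altogether: it goes straight to Lemma~\ref{lem:mgf-bound-general} with the choice $L=\gamma_m$, $\lambda=\beta I(\gamma_m)/\gamma_m$, obtains
\[
\p{S_m-\e{S_m}>\gamma_m}\leq \expp{-\beta I(\gamma_m)+\tfrac{\beta^2 c_\beta}{2}\tfrac{m I(\gamma_m)^2}{\gamma_m^2}} + m\,\p{X>\gamma_m},
\]
and then uses Lemma~\ref{lem:limit-of-log-of-sum} together with $I(\gamma_m)\ll\gamma_m^2/m$ and $\log m\ll I(\gamma_m)$ to get $\liminf \geq \beta$ for every $\beta<1$ --- exactly your conclusion, but without conditions you did not have. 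I recommend restructuring both halves to evaluate $I$ only at $\gamma_m$ and to bypass the $t_{\max}$ case analysis.
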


The proof of this theorem is presented in Section \ref{sec:proof:theorem2}.  Again we use this theorem to obtain large deviation results for a few concrete examples. 

\begin{coro}\label{corr:subweibule:ldp}
Let the tail of independent and identically distributed random variables $X_1, X_2, \ldots, X_m$ be captured by $I(t) = a_\alpha \sqrt[\alpha]{t}$ in the limit, where $\alpha > 1$. Then, we have
\begin{equation*}
\lim_{m \to \infty} \frac{- \log \p{S_m - \e{S_m} > mt}}{\sqrt[\alpha]{m}} = a_\alpha \sqrt[\alpha]{t}.
\end{equation*}
\end{coro}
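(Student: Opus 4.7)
The plan is to invoke Theorem~\ref{thm:ld-general} directly with the sequence $\gamma_m = mt$ for the fixed $t > 0$, and then divide through by $c_\alpha$ to obtain the stated limit. The work is entirely in verifying the hypotheses.

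I would first check the conditions on $I(s) = c_\alpha s^{1/\alpha}$. It is increasing; because $\alpha > 1$ we have $I(s) = o(s)$, so the $X_i$ are super-exponential in the sense of Definition~\ref{def:tail capture_l}. The ratio $\log(s)/I(s)$ tends to zero because any positive power of $s$ dominates $\log s$. Moreover, since the tail is captured in the limit, for every $\epsilon > 0$ there exists $T_\epsilon$ with $\p{X > s} \leq \expp{-(1-\epsilon) c_\alpha s^{1/\alpha}}$ for all $s > T_\epsilon$, which in particular gives finite moments of every order, hence a finite variance.

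Next I would verify the sandwich $\log m \ll I(\gamma_m) \ll \gamma_m^2/m$ for $\gamma_m = mt$. The lower side is clear since $I(\gamma_m) = c_\alpha (mt)^{1/\alpha}$ grows as a positive power of $m$ and so dominates $\log m$. The upper side reduces to $I(\gamma_m)/(\gamma_m^2/m) = c_\alpha m^{1/\alpha - 1} t^{1/\alpha - 2} \to 0$, which holds because $1/\alpha - 1 < 0$.

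The only remaining hypothesis, and the main obstacle, is showing that $c_{L,\beta}$ in \eqref{eq:bdd by c assumption} remains bounded as $L \to \infty$ for every $\beta < 1$. I would apply Lemma~\ref{lem:limit-of-c}(a), which gives convergence of $c_{L,\beta}$ to $\mathrm{Var}(X)$ in exactly the subWeibull regime $\alpha > 1$, $\beta < 1$. The technical wrinkle is that Lemma~\ref{lem:limit-of-c} is stated assuming the tail is captured by $I$ in the sense of Definition~\ref{def:tail capture}, whereas here only the weaker limiting form of Definition~\ref{def:tail capture_l} is assumed. I would patch this by working with $(1-\epsilon) c_\alpha s^{1/\alpha}$ beyond $T_\epsilon$ (handling the compact region $[0, T_\epsilon]$ trivially using the existing moments), applying the lemma with the perturbed constant to bound the tail-side contribution to $c_{L,\beta}$ uniformly in $L$, and finally letting $\epsilon \downarrow 0$. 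Once all hypotheses are in place, Theorem~\ref{thm:ld-general} yields
\begin{equation*}
\lim_{m \to \infty} \frac{-\log \p{S_m - \e{S_m} > mt}}{c_\alpha (mt)^{1/\alpha}} = 1,
\end{equation*}
which is the claimed identity after dividing both sides by $c_\alpha$.
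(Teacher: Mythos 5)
Your approach matches the paper's: invoke Theorem~\ref{thm:ld-general} with $\gamma_m = mt$ and check its hypotheses; the paper's own proof is a two-liner that only verifies the sandwich condition $\log m \ll I(\gamma_m) \ll \gamma_m^2/m$, leaving the rest implicit. You are more careful, and in particular you correctly observe that Lemma~\ref{lem:limit-of-c} is stated for tails captured in the sense of Definition~\ref{def:tail capture} while the corollary assumes only the limiting Definition~\ref{def:tail capture_l}; this is a genuine loose end the paper glosses over, and your $\epsilon$-perturbation patch (using $(1-\epsilon)c_\alpha$ beyond $T_\epsilon$ for each fixed $\beta<1$) closes it. One small inaccuracy to fix: the right-tail decay from Definition~\ref{def:tail capture_l} alone does not yield finite variance, since the left tail of $X$ is unconstrained; finite variance should be treated as an additional standing assumption, exactly as Theorem~\ref{thm:ld-general} already requires.
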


\begin{proof}
It suffices to choose $\gamma_m = mt$ and apply Theorem \ref{thm:ld-general} with $I(t) = a_\alpha \sqrt[\alpha]{t}$.  Note that
\begin{equation}\label{eq:subWeibul}
\log m \ll c_\alpha \sqrt[\alpha]{mt} \ll \frac{(mt)^2}{m} = m t^2, 
\end{equation}
for all $\alpha > 1$.
\end{proof}

\begin{rem} \label{rem:fill the gap of large deviation}
We should emphasize that the large deviation result for subWeibull distribution has been studied in the literature \cite{mikosch_ld}, \cite{borovkov2006integro}.
Being able to answer this question for subWeibull distributions, although it is not novel, shows the strength of the results developed in this paper.
Note that even if $t$ grows with $m$, as long as \eqref{eq:subWeibul} is satisfied, i.e. $mt_m \gg m^{\frac{\alpha}{2 \alpha -1}}$,  we have 
\begin{equation*}
\lim_{m \to \infty} \frac{- \log \p{S_m - \e{S_m} > mt_m}}{\sqrt[\alpha]{mt_m}} = a_\alpha.
\end{equation*}
On the other hand, it is known that if $m t_m \ll m^{\frac{\alpha}{2 \alpha - 1}} $, then the decay is characterized by $ \bar{\Phi} \intoo{\frac{m t_m}{\sqrt{m {\rm Var}(X)}} } $, where $\bar{\Phi} = 1 - \Phi$, and $\Phi$ denotes the cumulative distribution function of a standard normal random variable \cite{mikosch_ld}.  According to Table 3.1 of \cite{mikosch_ld} a similar result as the one presented in Corollary \ref{corr:subweibule:ldp} has been known for $ \gamma_m \gg m^{\frac{\alpha}{2 \alpha - 2}} $ when $ 0 \leq \frac{1}{\alpha} \leq \frac{1}{2} $. However as we discussed above, an extension of  Corollary \ref{corr:subweibule:ldp} fills the gap between $m^{\frac{\alpha}{2 \alpha - 2}}$ and $m^{\frac{\alpha}{2 \alpha - 1}}$, and shows that in this region still the tail of the sum behaves like the tail of the maximum.
\end{rem}

Theorem \ref{thm:ld-general} does not cover the polynomially-decaying tails. Hence, for the sake of completeness we discuss the polynomial example below.

\begin{coro} \label{coro:ld-poly-tail}
Suppose $X$ has zero mean and  finite variance $\sigma^2$ and its right tail is captured by $I(t) = \alpha \log t$ for some $\alpha > 2$.  For any sequence $\gamma_m$ that satisfies any of the following conditions
\begin{enumerate}[(i)]
\item 
 $\lim\limits_{m \to \infty} \frac{\log m}{\log \gamma_m} = k$ for some $k < 2$,
 
 \item  $\lim\limits_{m \to \infty} \frac{\log m}{\log \gamma_m} = 2$ and $\gamma_m \gg \sqrt{m \log m}$,
\end{enumerate}
 we have
\begin{equation}
\lim_{m \to \infty} \frac{- \log \p{S_m  > \gamma_m}}{I(\gamma_m) - \log m} = 1.
\end{equation}
\end{coro}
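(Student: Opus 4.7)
The plan is to establish matching lower and upper asymptotic bounds on $-\log \p{S_m > \gamma_m}$, normalized by $I(\gamma_m) - \log m = (\alpha - k + o(1))\log\gamma_m$ (where $k = 2$ in case (ii)). I read the statement as having an obvious typo in the ratio, i.e.\ the numerator is $-\log \p{S_m > \gamma_m}$.

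\textbf{Lower bound on the exponent (upper bound on the probability).} I would apply Theorem \ref{thm:concentration-general} at $t = \gamma_m/m$ with $I(t) = \alpha\log t$. A short check using either hypothesis yields $t > t_{\max}(\beta)$ for suitably chosen $\beta$, so the heavy-tail regime applies and gives
\[
\p{S_m > \gamma_m} \leq \gamma_m^{-c_t\beta\alpha} + m\,\gamma_m^{-\alpha}.
\]
The second summand is exactly $\exp(-(I(\gamma_m)-\log m))$, which already delivers the target exponent. To ensure the first summand is no larger (after $-\log$ and normalization), I would pick $\beta$ just below $1 - k/\alpha$ in case (i). Using the estimate on $c_{L,\beta}$ from Corollary \ref{coro:cl-upper-bound-poly-tail} together with the relation $m = \gamma_m^{k+o(1)}$, one checks that the correction term inside $c_t$ tends to $0$, so $c_t \to 1$. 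The normalized exponent from the first summand then approaches $\beta\alpha/(\alpha-k)$, which can be pushed to $1$ by letting $\beta \uparrow 1-k/\alpha$. This yields $\liminf \geq 1$.

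\textbf{Upper bound on the exponent (lower bound on the probability).} Here I would run a classical one-big-jump argument. Fix $\delta_m \to 0$ slowly with $\delta_m \gamma_m \gg \sqrt{m}$; this is feasible in case (i) because $\gamma_m^2 \gg m$ (from $k<2$), and in case (ii) by the hypothesis $\gamma_m \gg \sqrt{m\log m}$. Define
\[
E_i = \bigl\{X_i > (1+\delta_m)\gamma_m\bigr\} \cap \Bigl\{\, \lvert {\textstyle\sum_{j\neq i}} X_j \rvert \leq \delta_m \gamma_m \,\Bigr\},
\]
so $S_m > \gamma_m$ on each $E_i$. By independence and Chebyshev's inequality, $\p{E_i} \geq (1-o(1))\,\p{X_1 > (1+\delta_m)\gamma_m}$, while the tail-capture-in-limit hypothesis gives $\p{X_1 > (1+\delta_m)\gamma_m} = \exp(-(1+o(1))I(\gamma_m))$. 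Bonferroni yields $\p{\bigcup_i E_i} \geq \sum_i \p{E_i} - \binom{m}{2}\p{X_1 > (1+\delta_m)\gamma_m}^2$, and the pairwise correction is negligible because $m\exp(-I(\gamma_m))\to 0$ in both cases (since $k, 2 < \alpha$). Hence $\p{S_m > \gamma_m} \geq (1-o(1))\,m\,\exp(-(1+o(1))I(\gamma_m))$; taking $-\log$ and normalizing yields $\limsup \leq 1$.

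\textbf{Main obstacle.} The delicate step is case (ii), where $1 - k/\alpha = 1 - 2/\alpha$ coincides with the boundary beyond which the estimate in Corollary \ref{coro:cl-upper-bound-poly-tail} ceases to be uniformly bounded in $L$ (it grows like $(\log L)^2$ at the boundary itself). A direct choice $\beta = 1 - 2/\alpha$ would force the requirement $\gamma_m^2 \gg m(\log\gamma_m)^3$ on $c_t \to 1$, which is strictly stronger than the stated hypothesis. The remedy is a diagonal argument: take $\beta_m = 1 - 2/\alpha - \epsilon_m$ with $\epsilon_m \to 0$ slowly, so that Lemma \ref{lem:limit-of-c} keeps $c_{L,\beta_m}$ uniformly bounded by $\sigma^2 + o(1)$, forcing $c_t \to 1$, while simultaneously $\beta_m \alpha /(\alpha-2) \to 1$. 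The admissible rate of $\epsilon_m$ is dictated precisely by the slack in $\gamma_m/\sqrt{m\log m} \to \infty$. Once this boundary case is navigated, the two one-sided estimates match and the corollary follows.
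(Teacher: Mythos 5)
Your proposal is correct and follows essentially the same two-sided route as the paper: for the upper bound on the probability, apply Theorem~\ref{thm:concentration-general} at $t=\gamma_m/m$ in the $t>t_{\max}$ regime and let the free parameter $\beta$ approach $1-k/\alpha$; for the lower bound, run a one-big-jump argument and observe $m\,\p{X>\gamma_m}$ gives the stated exponent. The lower-bound variant you use (Bonferroni over $E_i$) is a harmless cosmetic difference from the paper's version, which makes the big-jump events disjoint by attaching a $\max_{i\neq j}X_i<\gamma_m$ constraint and so avoids the second-order correction entirely.

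Where you deviate is the ``main obstacle'' paragraph, and there you are solving a problem that does not exist. The paper does \emph{not} take $\beta=1-2/\alpha$ in case~(ii), nor does it need a diagonal sequence $\beta_m\uparrow 1-2/\alpha$. It fixes an arbitrary $\beta<1-2/\alpha$, for which Corollary~\ref{coro:cl-upper-bound-poly-tail} gives a bound on $c_{\gamma_m,\beta}$ that is uniformly bounded in $L$, so the $t>t_{\max}$ verification only requires $\gamma_m/m\gg \beta c_{\gamma_m,\beta}\, I(\gamma_m)/\gamma_m = O(\log m/\gamma_m)$, i.e.\ $\gamma_m^2\gg m\log m$, which is exactly hypothesis~(ii). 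This yields, for that fixed $\beta$,
\[
\liminf_{m\to\infty}\frac{-\log\p{S_m>\gamma_m}}{I(\gamma_m)-\log m}\;\geq\;\frac{\beta\alpha}{\alpha-2},
\]
and then one simply takes the supremum over $\beta<1-2/\alpha$ after the $\liminf$ in $m$. That is a strictly weaker requirement than a diagonal argument (no uniformity over $\beta$ for a single sequence is ever needed), and it is insensitive to the $(\log L)^2$ blow-up at the boundary value $\beta=1-2/\alpha$ that you flag. So your fix works but is over-engineered; the stated hypothesis $\gamma_m\gg\sqrt{m\log m}$ is already exactly what the fixed-$\beta$ argument consumes.
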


The proof can be found in Section \ref{ssec:proof:lastcor}. 

\begin{rem}
The result of Corollary \ref{coro:ld-poly-tail} is known in the literature. For instance, the interested reader may refer to Proposition 3.1 in  \cite{mikosch_ld}).  The main reason it is mentioned here is to show that this is also a simple byproduct of our main results in Section \ref{ssec:concentration}. Note that the conditions Corollary \ref{coro:ld-poly-tail} imposes on the growth of $\gamma_m$ cover all sequences that satisfy $\gamma_m \gg \sqrt{m \log m}$ (maybe after passing to a subsequence to make $\lim \frac{\log m}{\log \gamma_m}$ exist).  For sequences that grow slower than $\sqrt{m \log m}$ the rate function for large deviations is not $I(\gamma_m) - \log m$ anymore \cite{mikosch_ld}.
\end{rem}

\section{Discussion of the sharpness of Theorem \ref{thm:concentration-general}}

In this section, we would like to discuss that the bounds offered by Theorem \ref{thm:concentration-general} are sharp if compared with the limiting expressions obtained from the large deviation results. We clarify this point through the following two examples: 
Let $I(t)$ capture the right tail of a centered random variable $X$ and also captures its right tail in the limit ($I_{br}(t)$ has this property).  Assume that $X_1, ..., X_m$ are independent copies of $X$ and $S_m = \sum\limits_{i \leq m} X_i$.  Below we discuss the subWeibull distributions and the distributions with polynomial tail decays.

\begin{enumerate}
\item $I(t) = a_{\alpha} \sqrt[\alpha]{t}$: Theorem \ref{thm:concentration-general} yields

\begin{equation} \label{eq:dicussion-subweibull}
\p{S_m > \gamma_m} \leq 
\begin{dcases}
\expp{-c_{\frac{\gamma_m}{m}} \beta  I(\gamma_m)} + m \expp{- I(\gamma_m)},
&
\gamma_m \gg {m}^{\frac{\alpha }{2 \alpha - 1}},
\\
\expp{- \frac{{ \gamma_m}^2}{2 m \intoo{\sigma^2 + \epsilon}} } + m \expp{- \frac{m t_{\max} ^2}{\beta( \sigma^2 + \epsilon )}},
&
\gamma_m \ll {m}^{\frac{\alpha}{2 \alpha - 1}}. 
\end{dcases}
\end{equation}
Note that $1 - \beta$ and $\epsilon$ can be chosen arbitrarily small.  Moreover,  $\lim\limits_{\gamma_m \to \infty} c_{\frac{\gamma_m}{m}} = 1$.  Hence, in the first case the right hand side behaves like its dominant term which is $\expp{- I(\gamma_m)}$.  As proven in Theorem \ref{thm:ld-general}, $\p{S_m > \gamma_m} \sim \expp{- I(\gamma_m)}$ which proves the asymptotic sharpness of our first bound. 
Furthermore, when $\gamma_m \ll {m}^{\frac{\alpha}{2 \alpha - 1}}$ the right hand side of Inequality \eqref{eq:dicussion-subweibull} behaves like $\expp{-\frac{\gamma_m^2}{2 m \sigma^2}}$.  It is known for $\gamma_m$ growing at this speed we have
\begin{equation*}
\lim_{m \to \infty} \frac{\p{S_m > \gamma_m}}{\bar{\Phi} \intoo{ \frac{\gamma_m}{\sigma \sqrt{m} }}} = 1,
\end{equation*}
where $\bar{\Phi} = 1 - \Phi(t)$ and $\Phi$ is the CDF of standard normal distribution \cite{mikosch_ld}.  Since ${\bar{\Phi} \intoo{ \frac{\gamma_m}{\sigma \sqrt{m} }}}\sim \frac{ \sqrt{m} \sigma}{\sqrt{2 \pi} \gamma_m} \expp{ - \frac{\gamma_m^2}{2m \sigma^2}}$ we have
\begin{equation*}
\lim_{m \to \infty} \frac{- \log \bar{\Phi} \intoo{\frac{\gamma_m}{\sigma \sqrt{m}}}}{ \frac{\gamma_m^2}{2 m \sigma^2}} = 1.
\end{equation*}
Hence,
\begin{equation*}
\lim_{m \to \infty} \frac{- \log \p{S_m > \gamma_m}}{ \frac{\gamma_m^2}{2 m \sigma^2}} = 1.
\end{equation*}

This proves the asymptotic sharpness of our second bound.

\item $I(t) = \gamma \log t$ for $\gamma > 2$: Theorem \ref{thm:concentration-general} yields

\begin{equation*}
\p{S_m > \gamma_m} \leq 
\begin{dcases}
\expp{-c_{\frac{\gamma_m}{m}} \beta  I(\gamma_m)} +  \expp{- \intoo{I(\gamma_m) - \log m}},
&
\gamma_m \gg \sqrt{m \log m},
\\
\expp{- \frac{{ \gamma_m}^2}{2 m \intoo{\sigma^2 + \epsilon}} } + m \expp{- \frac{m t_{\max} ^2}{\beta( \sigma^2 + \epsilon )}},
&
\gamma_m \ll \sqrt{m \log m}, 
\end{dcases}
\end{equation*}
for any $\beta < 1 - \frac{2}{\gamma}$.  This time, one can easily check $m \expp{- I(\gamma_m)} = m \gamma_m^{- \gamma}$ and $\expp{- \frac{{ \gamma_m}^2}{2 m \intoo{\sigma^2 + \epsilon}} }$ will be dominant for the first and second cases, respectively.   One more time, the rate function given by Corollary \ref{coro:cbeta-for-poly-tail} in the first case, and the Gaussian CDF approximation in the second case \cite{mikosch_ld} match the dominant terms offered by Theorem \ref{thm:concentration-general}.  

\end{enumerate}

\riii{
\section{Comparison of Theorem \ref{thm:concentration-general} with existing inequalities}\label{sec:compall}

This Section aims to present a detailed comparison between the main result of this paper and several existing inequalities in the literature.  We choose the inequalities that bear the most similarities to our result, in terms of both assumptions and objective. For each existing inequality, first we mention the original result, with a slight adaptation of the notations, and then compare it with our Theorem \ref{thm:concentration-general}. As will be clarified Theorem \ref{thm:concentration-general} is better than the existing results in that the same inequality works on a large class of distributions and is always asymptotically sharp.

In what follows, we start with some of the existing results mentioned in the work of Nagaev \cite{nagaev1979large} followed by more recent results of \cite{adamczak2008tail, rio2017constants}.

\subsection{Theorem 1.3 and Corollary 1.8 of Nagaev (1979) \cite{nagaev1979large}}

In this section, we would like to review one of the main results of \cite{nagaev1979large}.

For $y_1, y_2, \ldots, y_n \in \mathbb{R}$, $t \geq 2$, $0< \alpha<1$, and $\beta= 1- \alpha$, define $\mvec{Y} = (y_1, ..., y_n)$, and 
\begin{align*}
    &
    A(t; 0, \mvec{Y}) = \sum_1^m \int_{0 \leq u \leq y_i} \abs{u}^t dF_i(u)
    , \quad
    \mu(-\infty, \mvec{Y}) = \sum_1^m \int_{ u \leq y_i} u dF_i(u)
    , \quad
    B^2(-\infty, \mvec{Y}) = \sum_1^m \int_{ u \leq y_i} u^2 dF_i(u)
    \\ &
    P_4 = \expp{ \beta \frac{x}{y} - \intoo{\intoo{1 - \frac{\alpha}{2}} \frac{x}{y} - \frac{\mu(-\infty, \mvec{Y})}{y}}. \log \intoo{\frac{\beta x y^{t - 1}}{A(t; 0, \mvec{Y})} + 1}  }
    \\ &
    P_5 = \expp{\intoo{\beta - \frac{t \alpha}{2} }\frac{x}{y} - \intoo{\beta \frac{x}{y} - \frac{\mu(-\infty, \mvec{Y})}{y}}. \log \intoo{\frac{\beta x y^{t - 1}}{A(t; 0, \mvec{Y})} + 1}}
    \\ &
    P_6 = \expp{- \frac{\alpha x \intoo{\frac{\alpha x}{2} - \mu(-\infty, \mvec{Y})}}{\rme^t B^2(-\infty, \mvec{Y})} }.
\end{align*}
One of the main results of \cite{nagaev1979large} is the following concentration result:

\begin{thm*}[1.3 from \cite{nagaev1979large}]
\label{thm:nagaev 1.3}
Suppose $t \geq 2, \; 0 < \alpha < 1$, and $\beta = 1 - \alpha$. 
If
\begin{equation}
    \label{eq:nagaev asum 1}
    \max \left[ t, \log \intoo{\frac{\beta x y^{t - 1}}{A(t; 0, \mvec{Y})} + 1} \right] \geq \frac{\alpha x y}{\rme^t B^2(-\infty, \mvec{Y})}
\end{equation}
then
\begin{equation}
    \label{eq:nagaev upbdd 1}
    \p{S_m \geq x} \leq \sum\limits_1^m \p{X_i > y_i} + P_6.
\end{equation}
If
\begin{equation}
    \label{eq:nagaev asum 2}
    \max \left[ t, \log \intoo{\frac{\beta x y^{t - 1}}{A(t; 0, \mvec{Y})} + 1} \right] < \frac{\alpha x y}{\rme^t B^2(-\infty, \mvec{Y})}
\end{equation}
then
\begin{equation}
    \label{eq:nagaev upbdd 2}
    \p{S_m \geq x} \leq \sum\limits_1^m \p{X_i > y_i} + P_4.
\end{equation}
If instead of \eqref{eq:nagaev asum 2} at least one of the conditions $\beta \geq t \frac{\alpha}{2}$ and $\beta x \geq \mu(-\infty, \mvec{Y})$ is fulfilled, then
\begin{equation}
    \label{eq:nagaev upbdd 3}
    \p{S_m \geq x} \leq \sum\limits_1^m \p{X_i > y_i} + P_5.
\end{equation}

\end{thm*}
In the above Theorem, it is assumed that $\e{\abs{X_i}^t} < \infty$ for some $t \geq 2$ and  $y \geq \max \cbr{y_1, ..., y_n}$.

As is clear, the main advantage of the above theorem is that it covers the large class of distributions with bounded $t$-moment. For instance, it does not require the assumption of identical distributions. However, as we discussed in the introduction, it leaves several free parameters for the user to choose, e.g. $(y_1, ..., y_n), y, \alpha$.  It is not clear what choice of these parameters lead to an optimal upper bound.  To obtain an optimal upper bound one requires to do the following two tedious tasks:
\begin{enumerate}
    \item Find sharp bounds for $P_4, P_5, P_6$ for arbitrary values of $\alpha$, $\beta$, $y$.
    \item Then, the user needs to plug in this bounds in \eqref{eq:nagaev upbdd 1} or \eqref{eq:nagaev upbdd 2} and find the optimal choice of $\alpha$, $\beta$, $y$ that generate the best upper bound. 
\end{enumerate}

 As is clear, tuning all of these free parameters optimally is a challenging task. In comparison, the user of Theorem \ref{thm:concentration-general} of this manuscript, does not need to tune any particular parameter. The only task that is left to the user is to find an upper bound of $\nu(mt, \beta)$, for which we have provided a few simple integral formulas.   Furthermore, Theorem \ref{thm:concentration-general} clearly separates the region of small deviations, where the tail looks like Gaussian, and the region of large deviations, where the tail behavior is dictated by the rate function $I(\cdot)$. However, it is not straightforward to see the double behavior of the tail bound and the boundary of its change of behavior from equations \eqref{eq:nagaev upbdd 1}, \eqref{eq:nagaev upbdd 2}, and \eqref{eq:nagaev upbdd 3}.  As discussed in Section \ref{sec:large deviation}, Theorem \ref{thm:concentration-general} not only determines the boundary of the two regions of deviation, but also offers an upper bound that is sharp asymptotically.\footnote{It is worth to mention that in \cite{nagaev1979large} there are also a couple of Theorems before the aforementioned one that cover $0 < t < 2$. These classes of Distributions are not covered by our results, because their variances are infinite.} In an effort to overcome the issues that we raised above, \cite{nagaev1979large} obtains the following corollary of the above theorem. 

\begin{coro*}[1.8 from \cite{nagaev1979large}]
If $\e{X_i} = 0$ and $A_t^+ < \infty, \; t \geq 2$, then
\begin{equation}
    \label{eq:nagaev coro upbd}
    \p{S_m \geq x} \leq c_t^{(1)} A_t^+ x^{-t} + \expp{-c_t^{(2)}\frac{x^2}{B_m^2}},
\end{equation}
where $c_t^{(1)} = \intoo{1 + \frac{2}{t}}^t$, $c_t^{(2)}  = 2 (t + 2)^{-2} \rme^{-t}$, and $A_t^+ = \sum\limits_1^n \int_{u \geq 0} u^t dF_i(u)$.
\end{coro*}

To clarify the difference of this result with ours, we assume $X_1, ..., X_m$ are iid and consider the following values for the remaining parameters in \eqref{eq:nagaev coro upbd}:
$t = 2, \; x = mu, \;   A_2^+ = m \sigma_+^2 , \; B_m^2 = m \sigma^2$.  Then \eqref{eq:nagaev coro upbd} reads:
\begin{equation}
    \label{eq:nagaev coro special case}
    \p{S_m \geq mu} \leq \frac{9 \sigma_+^2}{m u^2} + \expp{- \frac{m u^2}{8 \rme^2 \sigma^2}}
\end{equation}

While the double behavior of the deviation is apparent in this corollary, it is not asymptotically sharp for either small or large deviation regimes: 
\begin{itemize}
\item For $ u = O \intoo{ \frac{1}{\sqrt{m}}}$, the upper bound of \eqref{eq:nagaev coro special case} behaves as $\expp{- \frac{m u^2}{8 \rme^2 \sigma^2}}$ compared to the upper bound offered by our result, i.e. \eqref{eq:concentration one regime}: $\expp{-\frac{m u^2}{2 v(mu, \beta)}}$.  Given $v(mu, \beta) \xrightarrow{mu \to \infty} \sigma^2$ by Lemma \ref{lem:limit-of-c}, the asymptotic behavior of $\expp{-\frac{m u^2}{2 v(mu, \beta)}}$ and $\expp{-\frac{m u^2}{2 \sigma^2}}$ are the same. Hence, our result matches the tail of limiting distribution given by the central limit theorem.  Hence, the exponent of \eqref{eq:nagaev coro special case} is off by a factor $8 \rme^2 \simeq 59.11$ in this regime.  
\item When $ u \gg \frac{1}{\sqrt{m}}$, the dominant term of \eqref{eq:nagaev coro special case} is $\frac{9 \sigma_+^2}{m u^2}$, while the dominant term of our bound given in Remark \ref{rmk:one regime ineq} is $\expp{-\beta \max \cbr{c_u, \frac{1}{2}} I(m u)} + m \expp{-I(mu)}$.  Given the fact that $c_u \to 1 $, and $\beta$ can be chosen  close to $1$ as $mu \to \infty$, we can asymptotically capture tail behavior of $(m + 1) \expp{-I(mu)}$ which has been shown to be sharp in Section \ref{sec:large deviation}.  Note that for having finite second moment we need $I(u) > 2 log(u)$, hence $(m + 1) \exp(-I(m u)) \ll \frac{9 \sigma_+^2}{m u^2}$ which again shows the upper bound of \eqref{eq:nagaev coro special case} is not sharp in this regime of deviation. 
\end{itemize}

Note that the fact that Corollary 1.8 of \cite{nagaev1979large} confirms our main claim in the introduction that finding sharp bounds for the quantities involved in Theorem 1.3 of \cite{nagaev1979large} and tuning the parameters optimally is a challenging task by itself. 

\subsection{Theorem 4 of Adamczak,  R. (2008) \cite{adamczak2008tail}} 

In this section, we compare our main result, with the main result of \cite{adamczak2008tail}. Let $\psi_\alpha (x) = {\rm e}^{x^\alpha} -1$. For a random variable $X$, define also the Orlicz norm:
\[
\|X\|_{\psi_\alpha} = \inf\{\lambda>0 : \mathbb{E} \psi_\alpha (|X|/\lambda) \leq 1\}.
\]
We then have:

\begin{thm*}[4 from \cite{adamczak2008tail}]
Let $X_1, ..., X_n$ be independent random variables with values in a measurable space $(\mathcal{S, B})$ and $\mathcal{F}$ be a countable class of measurable functions $f: \mathcal{S} \to \mathbb{R}$.  Assume that for every $f \in \F$ and every $i, \e{f(X_i)} = 0$ and for some $\Tilde{\alpha} \in (0, 1]$ and all $i, \norm{\sup_f \abs{f(X_i)}}_{\psi_{\Tilde{\alpha}}} < \infty$.  Let
\begin{equation*}
    Z = \sup_{f \in \F} \abs{\sum_{i = 1}^n f(X_i)}.
\end{equation*}
Define moreover
\begin{equation*}
    \sigma^2 = \sup_{f \in \F} \sum_{i = 1}^n \e{f(X_i)^2}.
\end{equation*}
Then, for all $0 < \eta < 1$ and $\delta > 0$, there exists a constant $C = C(\Tilde{\alpha}, \eta, \delta)$, such that for all $t \geq 0$,

\begin{align*}
    \mathbb{P} \left( Z  \right. & \left. \geq (1 + \eta) \e{Z} + t \right)
    \\ & \leq
    \expp{- \frac{t^2}{2(1 + \delta) \sigma^2}} + 3 \expp{- \intoo{\frac{t}{C \norm{\max_i \sup_{f \in \F} \abs{f(X_i)}_{\psi_{\Tilde{\alpha}}}}}}^{\Tilde{\alpha}}
    }.
\end{align*}

and
\begin{align*}
    \mathbb{P} \left( Z  \right. & \left. \leq (1 - \eta) \e{Z} - t \right)
    \\ & \leq
    \expp{- \frac{t^2}{2(1 + \delta) \sigma^2}} + 3 \expp{- \intoo{\frac{t}{C \norm{\max_i \sup_{f \in \F} \abs{f(X_i)}}_{\psi_{\Tilde{\alpha}}}}}^{\Tilde{\alpha}}}.
\end{align*}
\end{thm*}

 The main objective of this theorem is to obtain a maximal inequality for a class of random processes with a common source of randomness.  Nevertheless, if the class $\F$ is a singleton that only consists the identity function and the random variables $X_1, ..., X_n \sim X$ have the same distribution, then the tail bound given here is similar to what we pursued in the current article.  Again for notational simplicity we consider the case of centered random variables.  If $\e{X_i} =0$, then Theorem 4 of \cite{adamczak2008tail} is simplified to:
 
\begin{equation}
    \label{eq:adamczak tail bound}
    \p{S_m \geq m t} \leq  \expp{- \frac{m t^2}{2(1 + \delta) \Tilde{\sigma}^2}} + 3 \expp{-\intoo{\frac{m t}{C \norm{X}_{\psi_{\Tilde{\alpha}}}}}^{\Tilde{\alpha}}},
\end{equation}

where $\Tilde{\sigma} = \e{X^2}$.

Now this inequality can be directly compared with our results:
\begin{enumerate}
\item First, as is clear, \eqref{eq:adamczak tail bound} requires $\norm{X}_{\psi_{\Tilde{\alpha}}} < \infty$ for some $\Tilde{\alpha} \leq 1$. Hence, it can only be applied to the subWeibull distributions for which the rate function satisfies $I(t) \geq c t^{\Tilde{\alpha}}$. Hence, our result handles more general heavy tails. 
\item Within the class of distributions with finite $\norm{X}_{\psi_{\Tilde{\alpha}}}$, \eqref{eq:adamczak tail bound} sharply characterizes the deviation in the Gaussian regime. However, in most cases, it is not easy to extract the exact constants of the rate function for deviations with larger sizes, for the following two reasons:  
\begin{itemize}
\item First, usually it is not straightforward to compute the exact value of $\norm{X}_{\psi_{\Tilde{\alpha}}}$ for a given distribution.  
\item More importantly, this theorem does not specify the exact value of $C=C(\Tilde{\alpha}, \eta, \delta)$.  Taking a closer look at the proof steps of this theorem also reveals that this work was not dedicated to achieve the best possible value for this constant. In particular, the proof in \cite{adamczak2008tail} writes: ``Without loss of generality we may and will assume that
\begin{equation*}\label{eq:condition_adamzek}
t / \norm{ \max_{1 \leq i \leq n} \sup_{f \in \mathcal{F}} |f(X_i)}_{\psi_{\alpha}} > K (\alpha, \eta, \delta)
\end{equation*}
otherwise we can make the theorem trivial by choosing the constant $C = C(\eta, \delta, \alpha)$ to be large enough." The value of $K (\alpha, \eta, \delta)$ will also be determined later based on the requirements of the proof.  
\end{itemize}
\end{enumerate}
This causes several major issues for a user of Theorem 4 of \cite{adamczak2008tail}. 
\begin{enumerate}
    \item The user of the  theorem should find the value of $C$ and $\norm{X}_{\psi_{\Tilde{\alpha}}}$, both of which are not straightforward. 
    \item Even if the user can find the best value of $C$, given the condition \eqref{eq:condition_adamzek} the concentration presented in the paper is not accurate for certain values of $t$ and it is difficult to figure out the range on which the inequalities are accurate.
    \item Even if one can calculate the exact values of $\norm{X}_{\psi_{\Tilde{\alpha}}}$ and $C$, it is not clear that the theorem offers a sharp rate function as we did in Section \ref{sec:large deviation}.

\end{enumerate}
   In comparison, note that Theorem \ref{thm:concentration-general} and Remark \ref{rmk:one regime ineq} obtain precise constants, that are asymptotically sharp.  Moreover, they cover a larger class of distributions (distributions with finite second moments) in a single inequality.  

\subsection{Corollary 4.2 of Rio, E. (2017) \cite{rio2017constants}}

\begin{coro*}[4.2 of \cite{rio2017constants}]

Assume $(M_j)_{0 \leq j \leq m}$ is a martingale in $L^2$ with respect to a non-decreasing filtration $(\F_j)$, such that $M_0 = 0$.  Set $X_j = M_j - M_{j - 1}$.  We assume that, for some constant $r > 2$,
\begin{equation}
    \label{eq:rio assumption 1}
    \norm{\e{X_j^2 \sVert \F_{j- 1}}} < \infty,
    \quad \text{ and } \quad
    \norm{\sup_{t > 0} \intoo{t^r \p{X_{j+} > t} \sVert \F_{j - 1}}}_\infty < \infty,
\end{equation}
for any $j \in \intcc{1, m}$.  We set
\begin{equation}
    \label{eq:rio parameter definition}
    \Tilde{\sigma}^2 = \norm{\sum_{j = 1}^m \e{X_j^2 \sVert \F_{j - 1}} },
    \quad \text{ and } \quad
    C_r^w(M) = \norm{\sup_{t > 0} \intoo{t^r \sum_{j = 1}^m \p{X_{j+} > t \sVert \F_{j - 1}}}}_\infty^{1/r},
\end{equation}
where $X_{j+} = \max \intoo{0, X_j}$.  Then, for any $z > 1$,
\begin{equation}
    \label{eq:rio upbdd by coro 4.2}
    \p{\max(M_0, M_1, ..., M_m) > \Tilde{\sigma} \sqrt{2 \log z} + C_r^w(M) \mu_r z^{1/r}} \leq 1/z,
\end{equation}
where $\mu_r = 2 + \max(4/3, r/3)$.
\end{coro*}

Clearly, one advantage of this result over ours is that it can be applied to dependent variables as long as their sum forms a Martingale. However, given that the sum of dependent variables is not the focus of the current paper, let us focus on $X_1, ..., X_m \overset{iid}{\sim} X$ with $\e{X} = 0$ and $\e{X} = \sigma^2$. In this case, \eqref{eq:rio upbdd by coro 4.2} is simplified to:
\begin{equation}
    \label{eq:rio in iis case}
    \p{S_m >  \sigma \sqrt{2 m \log z} + C_r^w(M) \mu_r z^{1/r}} \leq 1/z,
\end{equation}
where $\sigma^2 = \frac{\Tilde{\sigma}^2}{m} = \e{X^2}$. First, note that the structure of this concentration result is particularly tailored to the distributions for which $P(X > t)$ has a polynomial decay. For instance, the concentration is not particularly useful for subWeibull distributions.
So, the concentration results presented in this paper can be applied to more general distributions. Now, let us ignore this limitation and compare \eqref{eq:rio in iis case} with our own results on the types of distributions that \eqref{eq:rio in iis case} is designed for. 

Similar to \eqref{eq:adamczak tail bound}, \eqref{eq:rio in iis case} is sharp in the Gaussian regime, where $ \sigma \sqrt{2 m \log z}$ is the dominant term.  In this regime, if we ignore the second term in the upper bound we achieve
\begin{equation*}
    \p{S_m > \sigma \sqrt{2 m \log z}} \leq 1/z,
\end{equation*}
which is equivalent to
\begin{equation*}
    \p{S_m > m t} \leq \expp{- \frac{t^2 m}{2 \sigma^2}}.
\end{equation*}
When $z$ is large enough to make the second term of \eqref{eq:rio in iis case} dominant, by ignoring the first term we can obtain
\begin{equation}
    \label{eq:rio iid LD regim}
    \p{S_m > C_r^w(M) \mu_r z^{1/r}} \leq 1/z,
\end{equation}
where $C_r^w(M) =  \abs{m \sup\limits_{t > 0} t^r \p{X > t} }^{1/r} $. Suppose that we are interested in a distribution that satisfies $\p{X > t} \sim \frac{1}{t^r}$. Then, we  have  $t^r \p{X > t} = \Theta(1)$, and as $m \to \infty, \quad C_r^w(M)$  grows like $m^{1/r}$. Hence, the upper bound given by \eqref{eq:rio iid LD regim} is equivalent to
\begin{equation}
    \label{eq:rio ld simplified}
    \p{S_m > m t} \leq \intoo{C_r^w(m) \mu_r}^r \frac{1}{(m t)^r} = \frac{m \mu_r^r}{(m t)^r}.
\end{equation}
The upper bound given by Theorem \ref{thm:concentration-general} for large $t$, (or large $z$ in Rio's notation), is
\begin{equation}
    \label{eq:our bound in compar to rio}
    \p{S_m > m t} \leq \expp{-c_t \beta I(m t)} + m \expp{-I(mt)} = \frac{1}{(mt)^{c_t \beta r}} + \frac{m}{(m t)^r}.
\end{equation}

Hence, as is clear, \eqref{eq:rio ld simplified} has a slightly tighter order since $c_t \beta < 1$ in \eqref{eq:our bound in compar to rio}.  Nevertheless both bounds are tight enough to capture the asymptotic decay of $\p{S_m > mt}$ and recover its logarithmic rate function (see Section \ref{sec:large deviation}). However, note that Rio's bound is only sharp for very specific types of distributions with certain tail behavior, while our result offers sharp results on a wide range of distributions.

}

\section{Proofs of our main results} \label{sec:proofs}

In this section, we state and prove a key lemma about the truncated random variable. This lemma is important in the proof of our concentration and large deviation results.

\subsection{Proof of Theorem \ref{thm:concentration-general}}

\begin{proof}[Proof of Theorem \ref{thm:concentration-general}]
Note that by Lemma \ref{lem:mgf-bound-general} and \eqref{eq:bdd by c assumption} we have 
\begin{equation*} \label{eq:mgf-bound-concentration-general}
\log \e{\expp{\lambda (X^L - \e{X})}} \leq \frac{v \intoo{L,\beta}}{2} \lambda^2.
\end{equation*}
Moreover,
\begin{align}
\p{S_m - \e{S_m} > mt} & \leq
\p{\sum X_i^L - \e{S_m} > mt} + \p{\exists i \quad X_i > L} \nonumber
\\ & \overset{(\star)}{\leq}
\expp{- \lambda m t} \e{\expp{\lambda (X^L - \e{X} ) }}^m + m \p{X > L}
\nonumber \\ & \leq \label{ineq:deviation-bound-general}
\expp{m \intoo{- \lambda t + \frac{v \intoo{L,\beta}}{2} \lambda^2}} + m \expp{-I(L)}.
\end{align}

\rii{To obtain the inequality marked by $(\star)$ we used Markov's inequality.} The main remaining step is to find good choices for the free parameters $L$ and $\lambda$. The goal is to choose the values of $\lambda, L$ such that we get the best upper bound in \eqref{ineq:deviation-bound-general}. We consider two cases: (i) $t > t_{\max}$, and (ii) $t \leq t_{\max}$. In each case, we select these parameters accordingly. 

\begin{itemize}

\item Case 1 ($t > t_{\max}$): In this case, we choose $L = m t$ and $\lambda = \beta \frac{I(mt)}{mt}$. We have
\begin{align*}
\p{S_m - \e{S_m} > mt} &\leq
 \expp{ - \beta \intoo{1 - \frac{\beta v \intoo{L,\beta} I(mt)}{2 m t^2}}  I(mt) } + m\expp{- I(mt)}
\\  & = 
\expp{ - \beta c_t I(mt) } + m\expp{- I(mt)}.
\end{align*}

Note that since for all $t > t_{\max}$ we have $t > \beta v \intoo{L,\beta} \frac{I(mt)}{mt}$,  we can conclude $\frac{1}{2} \leq c_t < 1.$ \\

\item Case 2 ($t \leq t_{\max}$): In this case, we pick $L = m t_{\max}$ and $\lambda = \frac{t}{v \intoo{L,\beta}} \leq \frac{t_{\max}}{v \intoo{L,\beta}} = \beta \frac{I(L)}{L}$. Then, \eqref{ineq:deviation-bound-general} implies
\begin{align}
\p{S_m - \e{S_m} > mt} &\leq
\expp{-\frac{1}{2 v \intoo{L,\beta}} m t^2} + m \expp{- I(m t_{\max})} \nonumber
\\ & =
\expp{-\frac{1}{2 v \intoo{L,\beta}} m t^2} + m \expp{- \frac{1}{\beta v \intoo{L,\beta}} m t_{\max}^2}. \nonumber 
\end{align}

Note that $v \intoo{L, \beta}$ is increasing in $\beta$.  Hence, choosing a smaller value for $\lambda$, as we did in this case, causes no problem.

\end{itemize}

\end{proof}

\subsection{Proof of Remark \ref{rmk:one regime ineq}}
\label{sec:proof:remk one regime}

\begin{proof}
Let $L = mt$. Using \eqref{ineq:deviation-bound-general} and the fact that $v(L, \beta)$ is an increasing function of $\beta$ we obtain:
\begin{equation}
    \label{eq:proof:one regim general}
    \p{S_m > mt} \leq \expp{m \intoo{- \lambda t + \frac{v \intoo{mt,\beta}}{2} \lambda^2}} + m \expp{-I(mt)},
    \quad \forall \lambda \leq \frac{\beta I(mt)}{mt}
\end{equation}
To achieve \eqref{eq:concentration one regime} we need to show that we always have a choice for $\lambda \leq \frac{\beta I(mt)}{mt}$ such that
\begin{equation*}
    \expp{m \intoo{- \lambda t + \frac{v \intoo{mt,\beta}}{2} \lambda^2}} \leq  \expp{-\frac{m t^2}{2 v(mt, \beta)}} + \expp{-\beta \max \cbr{c_t, \frac{1}{2}} I(m t)}.
\end{equation*}


We consider two cases:
\begin{itemize}
    \item Case 1 ($c_t \geq \frac{1}{2}$):
    Choose $\lambda = \frac{\beta I(mt)}{mt}$ and we get
    $\expp{m \intoo{- \lambda t + \frac{v \intoo{mt,\beta}}{2} \lambda^2}} \leq \expp{- \beta c_t I(mt)} = \expp{- \beta \max \cbr{c_t, \frac{1}{2}} I(mt)}$
    
    \item Case 2 ($c_t < \frac{1}{2}$):
    Having $c_t < \frac{1}{2}$ means $\frac{t}{v(mt, \beta)} < \frac{\beta I(mt)}{mt}$, so we may choose $\lambda = \frac{t}{v(mt, \beta)}$ that leads to 
    \begin{equation*}
        \expp{m \intoo{- \lambda t + \frac{v \intoo{mt,\beta}}{2} \lambda^2}} =  \expp{-\frac{m t^2}{2 v(mt, \beta)}}.
    \end{equation*}
\end{itemize}

\end{proof}

\subsection{Proof of Lemma \ref{lem:limit-of-c}}\label{ssec:proof:lem1}
First, we prove the lemma under Assumption (a).
Note that for $L > \e{X}$ we have $ \intoo{X^L - \e{X}}^2 \leq   \intoo{X - \e{X}}^2 \in \mathcal{L}^1$. Furthermore, $X^L \xrightarrow{a.s.} X$. Hence, by using the dominant convergence theorem we obtain
\begin{equation}
\e{ \intoo{X^L - \e{X}}^2 \mathds{I} \intoo{X \leq \e{X}} } \xrightarrow{L \to \infty} \e{(X - \e{X})^2 \mathds{I} \intoo{X \leq \e{X}}} . 
\end{equation}

Furthermore,  \rii{noting $0 \leq \lambda_{L, \beta} \leq \beta \frac{I(L)}{L} \rightarrow 0$ as $L \rightarrow \infty$} it is straightforward to show that 
\begin{equation}
(X^L - \e{X})^2 \expp{\lambda_{L, \beta} (X^L - \e{X}} \xrightarrow{a.s.} (X - \e{X})^2. 
\end{equation}
Hence, if we find an $\mathcal{L}^1$ function that dominates $(X^L - \e{X})^2 \expp{\lambda_{L, \beta} (X^L - \e{X}}$, then we can use the dominant convergence theorem to complete the proof.  Toward this goal, we consider $$Y =  \intoo{X - \e{X}}^2 \expp{\beta c_\alpha \sqrt[\alpha]{\max(X, 0)} + 1} \mathds{I} \intoo{X > \e{X}}.$$
Note that for $X > \e{X}$, $L > 2 \e{X} $ and $- \lambda_{L, \beta} \e{X} \leq 1 $, we have 
\begin{equation}
\expp{\lambda_{L, \beta} (X^L - \e{X})}  \leq \expp{\lambda_{L, \beta} X^L + 1} \leq \expp{ \beta  c_\alpha \frac{\sqrt[\alpha]{L}}{L} X^L + 1} \leq \expp{\beta c_\alpha \sqrt[\alpha]{\max(X, 0)} + 1}.
\end{equation}
Thus, for $L$ large enough we have
\begin{equation}
(X^L - \e{X})^2 \expp{\lambda_{L, \beta} (X^L - \e{X})} \mathds{I} \intoo{X > \e{X}} \leq Y.
\end{equation}
 To prove the integrability of $Y$, note that 
\begin{align}
\mathbb{E} & \left[  \intoo{X - \e{X}}^2  \expp{ \beta c_\alpha \sqrt[\alpha]{\max (X, 0)}} \mathds{I}(X > \e{X}) \right] \nonumber 
\\ & =
\int_0^\infty \p{\intoo{X - \e{X}}^2 \expp{ \beta c_\alpha \sqrt[\alpha]{\max (X, 0)}} > u, X > \e{X}} du \nonumber
\\ & \leq
\e{\intoo{X - \e{X}}^2 \mathds{I} \intoo{\e{X} \leq X < 0}} + 
\int_0^\infty \p{X > t} du \qquad (t - \e{X})^2 \expp{\beta c_\alpha \sqrt[\alpha]{t}} = u \nonumber
\\ & \leq
Var(X) + \int_0^\infty \expp{- c_\alpha \sqrt[\alpha]{t}} du \nonumber 
\\ & \leq
Var(X) + \int_0^\infty \expp{- c_\alpha \sqrt[\alpha]{t}} \intoo{2(t - \e{X}) + \frac{\beta c_\alpha}{\alpha} t^{\frac{1}{\alpha} - 1} (t - \e{X})^2} \expp{\beta c_\alpha \sqrt[\alpha]{t	}} dt \nonumber
\\ & \leq
Var(X) + \int_0^\infty \expp{- c_\alpha(1 - \beta) \sqrt[\alpha]{t}} {\rm Poly} \intoo{t^{\frac{1}{\alpha} - 1}, t} dt  < \infty. \nonumber
\end{align}
Recall that $\beta < 1$ and $c_\alpha > 0$, hence the exponent of the last line is negative.  Thus $Y$ is integrable as it was desired.

The proof under assumption (b) is analogous to the proof of part (a). The only difference is to prove the dominant convergence theorem for the following variable:
\[
(X^L - \e{X})^2 \expp{\lambda_{L, \beta} (X^L - \e{X)}} \mathds{I} \intoo{X > \e{X}}.
\]
Toward this goal we use the dominant variable:
\begin{align*}
Y & =  \intoo{X - \e{X}}^2 \expp{\beta \gamma \log \intoo{X - \e{X}} } \mathds{I} \intoo{X > \e{X}} 
\\ & =
\intoo{X - \e{X}}^{2 + \beta \gamma}  \mathds{I} \intoo{X > \e{X}}.
\end{align*}
The proof of the integrability of this variable is left to the readers.

\subsection{Proof of Theorem \ref{thm:ld-general}} \label{sec:proof:theorem2}

We start with a lemma that will be used in our proof later. 
\begin{lem} \label{lem:limit-of-log-of-sum}
Let ${a_n}, {b_n}$ and ${c_n}$ be sequences of positive numbers such that
\begin{equation*}
\lim_{n \to \infty} \frac {\log a_n}{c_n} = a, \quad \lim_{n \to \infty} \frac{\log b_n}{c_n} = b, \quad \lim_{n \to \infty} c_n = \infty.
\end{equation*}
Then
\begin{equation}
\lim_{n \to \infty} \frac{ \log (a_n + b_n) }{c_n} = \max \cbr{a, b}.
\end{equation}
\end{lem}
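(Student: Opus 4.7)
The plan is to sandwich $a_n+b_n$ between $\max(a_n,b_n)$ and $2\max(a_n,b_n)$, take logarithms, divide by $c_n$, and pass to the limit. Without loss of generality I will assume $a\geq b$, so that $\max\{a,b\}=a$; the argument is symmetric.

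For the lower bound I use $a_n+b_n\geq a_n$ together with monotonicity of $\log$ to conclude $\log(a_n+b_n)/c_n \geq \log(a_n)/c_n$. The right-hand side tends to $a$ by hypothesis, which immediately yields $\liminf_{n\to\infty}\log(a_n+b_n)/c_n\geq a$. For the upper bound I take logarithms in $a_n+b_n\leq 2\max(a_n,b_n)$ to get $\log(a_n+b_n)\leq \log 2 + \max(\log a_n,\log b_n)$; dividing by $c_n$ and using $c_n\to\infty$, the $\log 2 / c_n$ term vanishes, so the whole estimate reduces to showing that $\max(\log a_n,\log b_n)/c_n\to a$.

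The $\max$ step is the only piece requiring a moment of care, and it splits into two cases. If $a>b$, then for all sufficiently large $n$ one has $\log a_n/c_n > \log b_n/c_n$, and by positivity of $c_n$ this forces $\log a_n > \log b_n$ eventually, so $\max(\log a_n,\log b_n)=\log a_n$ for large $n$ and the ratio tends to $a$. If $a=b$, then both $\log a_n/c_n$ and $\log b_n/c_n$ converge to the common limit $a$, hence so does their pointwise maximum (which is squeezed between the minimum and the sum of the two nonnegative deviations from $a$). In either case I obtain $\limsup_{n\to\infty}\log(a_n+b_n)/c_n\leq a$, and combining the two bounds closes the proof.

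I do not foresee any genuine obstacle here: the lemma is a routine real-analysis fact and the sandwich inequalities do essentially all the work. The only subtlety is the max identity just handled, and the mild use of $c_n\to\infty$ to absorb the constant $\log 2$.
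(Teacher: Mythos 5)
Your proof is correct and follows essentially the same route as the paper's: sandwich $a_n+b_n$ between $a_n$ and $2\max(a_n,b_n)$, take logarithms, and let $c_n\to\infty$ absorb the constant $\log 2$. You are in fact slightly more careful than the paper, which asserts that $a\geq b$ forces $a_n\geq b_n$ for large $n$ (not true when $a=b$), whereas your explicit case split on $a>b$ versus $a=b$ handles the maximum cleanly.
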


\vspace{.5 cm}
\begin{proof}
Without loss of generality assume $a \geq b$, hence $a_n \geq b_n$ for large enough $n$.  Thus
\begin{align*}
a = \lim_{n \to \infty} \frac {\log a_n}{c_n} \leq \lim_{n \to \infty} \frac {\log (a_n + b_n)}{c_n} \leq \lim_{n \to \infty} \frac {\log 2 a_n}{c_n} = \lim_{n \to \infty} \frac {\log 2}{c_n} + \lim_{n \to \infty} \frac {\log a_n}{c_n} = a.
\end{align*}
Therefore
\begin{equation*}
\lim_{n \to \infty} \frac{ \log (a_n + b_n) }{c_n} = a.
\end{equation*}
\end{proof}

First note that
\begin{align*} \label{eq:deviation based on one sample}
 \p{S_m - \e{S_m} > \gamma_m} \geq \p{X > \gamma_m} \p{ S_{m - 1} - \e{S_{m - 1}} \geq \e{X}}.
\end{align*}

Since $\frac{S_{m - 1} - \e{S_{m - 1}}}{\sqrt{m - 1}} \xrightarrow{d} \mathcal{N}(0, {\rm Var}(X))$ and $\frac{\e{X}}{\sqrt{m - 1}} \to 0$ we have 
\begin{equation*}
\p{ S_{m - 1} - \e{S_{m - 1}} \geq \e{X}} \geq C > 0,
\end{equation*}
for a positive constant $C$ and large enough $m$.  Therefore,

\begin{align} \label{eq:ld-upper-bound-general}
\lim_{m \to \infty} \frac{ - \log \p{S_m - \e{S_m} > \gamma_m} }{I(\gamma_m)} \leq \lim_{m \to \infty} \frac{- \log \p{X > \gamma_m}}{I(\gamma_m)} + \frac{-\log C}{I(\gamma_m)} = 1.
\end{align}
To obtain the last equality we used the fact that since $\log m \ll I(\gamma_m)$ we have $I(\gamma_m) \rightarrow  \infty$ as $m \rightarrow \infty$. Hence, 
\begin{equation*}
\lim_{m \to \infty} \frac{- \log C}{I(\gamma_m)} = 0.
\end{equation*}
On the other hand,
\begin{align}\label{eq:proofthm2_first}
\p{S_m - \e{S_m} > \gamma_m} &\leq 
\expp{- \lambda \gamma_m} \e{\expp{\lambda (X^L - \e{X})}}^m + m \p{X > L} \nonumber
\\ & \leq
\expp{- \lambda \gamma_m} \expp{\frac{ k_{L, \lambda}}{2} \lambda^2 m} + m \p{X > L},
\end{align}
where we used Lemma \ref{lem:mgf-bound-general} to obtain the last inequality. 
Let $L = \gamma_m$ and $\lambda = \beta \frac{I(\gamma_m)}{\gamma_m}$.  Moreover, assume $v_\beta$ is the bound for $v \intoo{L, \beta}$ when $L$ is large enough. Then, \eqref{eq:proofthm2_first} implies that

\begin{align} \label{eq:p-lower-bound-for-LD}
\p{S_m - \e{S_m} > \gamma_m} &\leq 
\expp{ - \beta I(\gamma_m) + \frac{\beta^2 c_{\beta}}{2} \frac{m I(\gamma_m)^2}{\gamma_m^2}} + m \p{X > \gamma_m} .
\end{align}
In order to find a lower bound for $\lim \frac{- \log \p{S_m - \e{S_m} > \gamma_m}}{I(\gamma_m)}$, we use Lemma \ref{lem:limit-of-log-of-sum}. Hence, we need to bound each term of \eqref{eq:p-lower-bound-for-LD} separately.

\begin{equation} \label{eq:p-ubd-term1}
\lim_{m \to \infty} \frac{\beta I(\gamma_m) - \frac{\beta^2 c_{\beta}}{2} \frac{m I(\gamma_m)^2}{\gamma_m^2} }{I(\gamma_m)} = \beta + \frac{\beta^2 v_\beta}{2} \lim_{m \to \infty} \frac{- m I(\gamma_m)}{\gamma_m^2} = \beta,
\end{equation}
where we used $I(\gamma_m) = o(\frac{\gamma_m^2}{m})$ to obtain the last equality.  Moreover,
\begin{equation} \label{eq:p-ubd-term2}
\lim_{m \to \infty} \frac{- \log(m \p{X > \gamma_m})}{I(\gamma_m)} = 1.
\end{equation}
The last equality holds because $I$ captures the tail of $X$ asymptotically and grows faster than $\log(m)$.
Hence, using \eqref{eq:p-lower-bound-for-LD}, \eqref{eq:p-ubd-term1} and \eqref{eq:p-ubd-term2} we obtain
\begin{equation*} 
\lim_{m \to \infty} \frac{- \log \p{S_m - \e{S_m} > mt}}{I(mt)} \geq \beta, \qquad \forall \beta < 1,
\end{equation*}

which implies
\begin{equation} \label{eq:ld-lower-bound-general}
\lim_{m \to \infty} \frac{- \log \p{S_m - \e{S_m} > mt}}{I(mt)} \geq 1.
\end{equation}
By using \eqref{eq:ld-upper-bound-general} and \eqref{eq:ld-lower-bound-general} we obtain
\begin{equation*}
\lim_{m \to \infty} \frac{ - \log \p{S_m - \e{S_m} > m t} }{I(m t)} = 1,
\end{equation*}
which concludes the proof.

\subsection{Proof of Corollary \ref{coro:ld-poly-tail}}\label{ssec:proof:lastcor}

First, assume $\gamma_m$ satisfies (i).
Let $\beta < 1 - \frac{k}{\alpha}$, hence $(1 - \beta) \alpha = k' > k$.  According to Corollary \ref{coro:cl-upper-bound-poly-tail} for this $\beta$ and $L = \gamma_m$ we have
\begin{equation*}
v \intoo{\gamma_m, \beta} \leq C \gamma_m^{2 - (1 - \beta) \alpha} \log \gamma_m = C \gamma_m^{2 - k'} \log \gamma_m.
\end{equation*}
Therefore
\begin{align*}
\frac{\gamma_m}{m} \gg   \gamma_m^{2 - k'} \frac{\log m}{\gamma_m} \geq C' \beta v \intoo{\gamma_m, \beta} \frac{I(\gamma_m)}{\gamma_m} ,
\end{align*}
since we have $\lim \frac{\log m}{\log \gamma_m} = k < k'$.  Thus, for large enough $m$, when applying Theorem \ref{thm:concentration-general} with $t = \frac{\gamma_m}{m}$ and the chosen $\beta$ above we will be in the $t > t_{\max}$ regime. 

For the second case that $\gamma_m$ satisfies (ii), Lemma \ref{coro:cl-upper-bound-poly-tail} implies that for any $\beta < 1 - \frac{2}{\alpha}$, $v \intoo{\gamma_m, \beta}$ remains bounded. Hence we have
\begin{equation*}
\frac{\gamma_m}{m} \gg \beta v \intoo{\gamma_m , \beta} \frac{I(\gamma_m)}{\gamma_m} = O \intoo{\frac{\log m}{\gamma_m}} ,
\end{equation*}

which means we still are in the region $t > t_{\max}$. Hence,
\begin{equation} \label{proof:ld-poly:prob-upper-bound}
\p{S_m > \gamma_m} \leq \expp{-c_ {\frac{\gamma_m}{m}} \beta I(\gamma_m)} + m \expp{- I(\gamma_m)}.
\end{equation}
Note that 
$ c_{\frac{\gamma_m}{m}} = 1 - \frac{1}{2} \frac{\beta v \intoo{\gamma_m, \beta}}{\frac{\gamma_m}{m}} \frac{I(\gamma_m)}{\gamma_m} \xrightarrow{m \to \infty} 1 $, so we obtain

\begin{align} \label{proof:ld-poly:first-term-upper-bound}
\lim_{m \to \infty} \frac{c_{\frac{\gamma_m}{m}} \beta I(\gamma_m)}{I(\gamma_m) - \log m} =
\lim_{m \to \infty} \frac{\beta }{1 - \frac{\log m}{I(\gamma_m)}} =
\lim_{m \to \infty} \frac{\beta}{1 - \frac{\log m}{\alpha \log  {\gamma_m}}} = \frac{\beta}{1 - \frac{k}{\alpha}} = \frac{\alpha - k'}{\alpha - k}, \qquad \forall k' > k.
\end{align}
Moreover,
\begin{align} \label{proof:ld-poly:second-term-upper-bound}
\lim_{m \to \infty} \frac{I(\gamma_m) - \log m}{I(\gamma_m) - \log m} = 1.
\end{align}
By combining \eqref{proof:ld-poly:prob-upper-bound}, \eqref{proof:ld-poly:first-term-upper-bound} and \eqref{proof:ld-poly:second-term-upper-bound} we obtain
\begin{equation*}
\lim_{m \to \infty} \frac{- \log \p{S_m > \gamma_m}}{I(\gamma_m) - \log m} \geq \frac{\alpha - k'}{\alpha - k}, \qquad \forall k' > k,
\end{equation*}
which implies
\begin{equation} \label{proof:ld-poly:ld-lower-bound}
\lim_{m \to \infty} \frac{- \log \p{S_m > \gamma_m}}{I(\gamma_m) - \log m} \geq 1.
\end{equation}
On the other hand, 
\begin{align} \nonumber
\p{S_m > \gamma_m} &\geq
 \sum_{j = 1}^m \p{\sum_{i \neq j} X_i > - \epsilon \sqrt{m}, \quad \max\limits_{i \neq j} X_i < \gamma_m} \p{X_j \geq \gamma_m + \epsilon \sqrt{m}}
 \\ & = \nonumber
 m \p{\frac{S_{m - 1}}{\sqrt{m}} > -\epsilon, \max\limits_{i \leq m - 1} < \gamma_m } \p{X_m \geq \gamma_m + \epsilon \sqrt{m}}
 \\ & \geq \nonumber
 \intoo{\p{\frac{S_{m - 1}}{\sqrt{m}} > - \epsilon} - \p{\exists i \leq m -1 , X_i > \gamma_m}} m \p{X \geq \gamma_m + \epsilon \sqrt{m} }
\\ & \geq \label{proof:ld-poly:prob-lower-bound1}
 \intoo{\p{\frac{S_{m - 1}}{\sqrt{m}} > - \epsilon} -  (m - 1)\p{X > \gamma_m}} m \p{X \geq \gamma_m + \epsilon \sqrt{m} }.
 \end{align}
Note that by the central limit theorem we have $\p{\frac{S_{m - 1}}{\sqrt{m}} > - \epsilon} \geq \p{\frac{S_{m - 1}}{\sqrt{m}} > 0}\xrightarrow{m \to \infty} \frac{1}{2}$.  Furthermore,

\begin{equation} \label{proof:ld-poly:mp(X<gamma)}
(m - 1)\p{X > \gamma_m} = \expp{\log(m - 1) - I_{br}(\gamma_m)} \sim \expp{\log(m - 1) - \alpha \log (\gamma_m)}  \sim \expp{(1 - \frac{\alpha}{k}) \log m }.
\end{equation}
Since $k \leq 2 < \alpha$, the right hand side of \eqref{proof:ld-poly:mp(X<gamma)} goes to $0$ as $m$ grows. Hence for large enough $m$, we have 
\begin{equation*}
 \intoo{\p{\frac{S_{m - 1}}{\sqrt{m}} > - \epsilon} -(m-1) \p{X > \gamma_m}} \geq \frac{1}{3}.
\end{equation*}
Therefore, by \eqref{proof:ld-poly:prob-lower-bound1} we obtain
\begin{eqnarray*}
\lefteqn{\lim_{m \to \infty} \frac{- \log \p{S_m > \gamma_m}}{I(\gamma_m) - \log m}   \leq 
\lim_{m \to \infty}  \frac{\log 3 - \log \p{X > \gamma_m + \epsilon \sqrt{m}} - \log m}{\alpha \log \gamma_m - \log m}}\\ 
& = & \lim_{m \to \infty} \frac{\alpha \log \intoo{\gamma_m + \epsilon \sqrt{m}} - \log m}{\alpha \log \gamma_m - \log m}
 =  1 \ \ \ \ \ \ \ \ \ \ \ 
\end{eqnarray*}
To obtain the last equality we have used $\lim_{m \to \infty}  \frac{\log \intoo{\gamma_m + \epsilon \sqrt{m}}}{\log \gamma_m} = 1$ which can be easily proved by noting that $\log \gamma_m \leq \log \intoo{\gamma_m + \epsilon \sqrt{m}} \leq \log \gamma_m + \frac{\epsilon \sqrt{m}}{\gamma_m} $ and that $ \sqrt{m} \ll \gamma_m$ since $k < 2$.

\section{Conclusion} \label{sec:conclusion}

We developed a framework to study the concentration of the sum of independent and identically distributed random variables with heavy tails. In particular, we considered distributions for which the moment generating function does not exist.  Techniques that we offered in this paper are pretty simple and yet effective for all distributions that have finite variances. The generality and simplicity of the tools not only enable us to recognize different deviation behaviors, but also help us to determine the boundary of such phase transitions precisely.  Furthermore, we showed the tools that we developed for obtaining concentration inequalities are sharp enough to offer large deviation results as well. 
Note that there are plenty of results in the literature, such as Hanson-Wright inequality \cite{rudelson2013hanson} and Gartner-Ellis Theorem \cite{Dembo1998LargeDT}, whose proof  heavily relies on the moment generating function. We believe that the framework presented here can extend all such results to the class of distributions with finite variance.

\bibliographystyle{plain}
\bibliography{References.bib}

\begin{thebibliography}{10}

\bibitem{adamczak2008tail}
Radoslaw Adamczak.
\newblock A tail inequality for suprema of unbounded empirical processes with
  applications to markov chains.
\newblock {\em Electronic Journal of Probability}, 13:1000--1034, 2008.

\bibitem{coper}
Milad Bakhshizadeh, Arian Maleki, and Shirin Jalali.
\newblock Using black-box compression algorithms for phase retrieval.
\newblock {\em IEEE Transactions on Information Theory}, 2020.

\bibitem{bazhba2017sample}
Mihail Bazhba, Jose Blanchet, Chang-Han Rhee, and Bert Zwart.
\newblock Sample-path large deviations for l\'evy processes and random walks
  with weibull increments.
\newblock {\em arXiv preprint arXiv:1710.04013}, 2017.

\bibitem{borovkov2006integro}
Aleksandr~A Borovkov and Anatolii~A Mogulskii.
\newblock Integro-local and integral theorems for sums of random variables with
  semiexponential distributions.
\newblock {\em Siberian Mathematical Journal}, 47(6):990--1026, 2006.

\bibitem{borovkov2000large}
Aleksandr~Alekseevich Borovkov.
\newblock Large deviation probabilities for random walks with semiexponential
  distributions.
\newblock {\em Siberian Mathematical Journal}, 41(6):1290--1324, 2000.

\bibitem{Dembo1998LargeDT}
Amir Dembo and Ofer Zeitouni.
\newblock Large deviations techniques and applications.
\newblock 1998.

\bibitem{denisov2008large}
Denis Denisov, Antonius~Bernardus Dieker, Vsevolod Shneer, et~al.
\newblock Large deviations for random walks under subexponentiality: the
  big-jump domain.
\newblock {\em The Annals of Probability}, 36(5):1946--1991, 2008.

\bibitem{FAN20123545}
Xiequan Fan, Ion Grama, and Quansheng Liu.
\newblock Hoeffding’s inequality for supermartingales.
\newblock {\em Stochastic Processes and their Applications},
  122(10):3545--3559, 2012.

\bibitem{gurbuzbalaban2020heavy}
Mert Gurbuzbalaban, Umut Simsekli, and Lingjiong Zhu.
\newblock The heavy-tail phenomenon in sgd.
\newblock {\em arXiv preprint arXiv:2006.04740}, 2020.

\bibitem{klass97}
Marjorie~G Hahn, Michael~J Klass, et~al.
\newblock Approximation of partial sums of arbitrary iid random variables and
  the precision of the usual exponential upper bound.
\newblock {\em The Annals of Probability}, 25(3):1451--1470, 1997.

\bibitem{hitczenko}
Pawel Hitczenko and Stephen Montgomery-Smith.
\newblock Measuring the magnitude of sums of independent random variables.
\newblock {\em Annals of probability}, pages 447--466, 2001.

\bibitem{hitczenko97}
Pawe{\l} Hitczenko, Stephen~J Montgomery-Smith, and Krzysztof Oleszkiewicz.
\newblock Moment inequalities for sums of certain independent symmetric random
  variables.
\newblock {\em Studia Math}, 123(1):15--42, 1997.

\bibitem{klass07}
Michael Klass, Krzysztof Nowicki, et~al.
\newblock Uniformly accurate quantile bounds via the truncated moment
  generating function: the symmetric case.
\newblock {\em Electronic Journal of Probability}, 12:1276--1298, 2007.

\bibitem{klass16}
Michael~J Klass and Krzysztof Nowicki.
\newblock Uniform bounds on the relative error in the approximation of upper
  quantiles for sums of arbitrary independent random variables.
\newblock {\em Journal of Theoretical Probability}, 29(4):1485--1509, 2016.

\bibitem{kontoyiannis2006measure}
Ioannis Kontoyiannis, Mokshay Madiman, et~al.
\newblock Measure concentration for compound poisson distributions.
\newblock {\em Electronic Communications in Probability}, 11:45--57, 2006.

\bibitem{arun18beyond-sub-gaussian}
Arun~Kumar Kuchibhotla and Abhishek Chakrabortty.
\newblock Moving beyond sub-gaussianity in high-dimensional statistics:
  Applications in covariance estimation and linear regression.
\newblock {\em arXiv preprint arXiv:1804.02605}, 2018.

\bibitem{ledoux2013probability}
Michel Ledoux and Michel Talagrand.
\newblock {\em Probability in Banach Spaces: isoperimetry and processes}.
\newblock Springer Science \& Business Media, 2013.

\bibitem{mikosch_ld}
Thomas Mikosch and Aleksandr~V Nagaev.
\newblock Large deviations of heavy-tailed sums with applications in insurance.
\newblock {\em Extremes}, 1(1):81--110, 1998.

\bibitem{nagaev1979large}
Sergey~V Nagaev.
\newblock Large deviations of sums of independent random variables.
\newblock {\em The Annals of Probability}, pages 745--789, 1979.

\bibitem{nolan2003stable}
John Nolan.
\newblock {\em Stable distributions: models for heavy-tailed data}.
\newblock Birkhauser New York, 2003.

\bibitem{chamakh2021orlicz}
\ri{Chamakh, Linda and Gobet, Emmanuel and Liu, Wenjun}.
\newblock \ri{Orlicz norms and concentration inequalities for $\beta$-heavy
  tailed random variables}.
\newblock 2021.

\bibitem{gorbunov2020stochastic}
\ri{Gorbunov, Eduard and Danilova, Marina and Gasnikov, Alexander}.
\newblock \ri{Stochastic optimization with heavy-tailed noise via accelerated
  gradient clipping}.
\newblock {\em arXiv preprint arXiv:2005.10785}, 2020.

\bibitem{rio2017constants}
Emmanuel Rio.
\newblock About the constants in the fuk-nagaev inequalities.
\newblock {\em Electronic Communications in Probability}, 22:1--12, 2017.

\bibitem{zhang2019adaptive}
\ri{Zhang, Jingzhao and Karimireddy, Sai Praneeth and Veit, Andreas and Kim,
  Seungyeon and Reddi, Sashank J and Kumar, Sanjiv and Sra, Suvrit}.
\newblock \ri{Why are adaptive methods good for attention models?}
\newblock {\em arXiv preprint arXiv:1912.03194}, 2019.

\bibitem{rozovskii1994probabilities}
Leonid~Victorovich Rozovskii.
\newblock Probabilities of large deviations on the whole axis.
\newblock {\em Theory of Probability \& Its Applications}, 38(1):53--79, 1994.

\bibitem{rozovskii1990probabilities}
LV~Rozovskii.
\newblock Probabilities of large deviations of sums of independent random
  variables with common distribution function in the domain of attraction of
  the normal law.
\newblock {\em Theory of Probability \& Its Applications}, 34(4):625--644,
  1990.

\bibitem{vershynin_non-asymptotic}
Mark Rudelson and Roman Vershynin.
\newblock Non-asymptotic theory of random matrices: extreme singular values.
\newblock In {\em Proceedings of the International Congress of Mathematicians
  2010 (ICM 2010) (In 4 Volumes) Vol. I: Plenary Lectures and Ceremonies Vols.
  II--IV: Invited Lectures}, pages 1576--1602. World Scientific, 2010.

\bibitem{rudelson2013hanson}
Mark Rudelson, Roman Vershynin, et~al.
\newblock Hanson-wright inequality and sub-gaussian concentration.
\newblock {\em Electronic Communications in Probability}, 18, 2013.

\bibitem{concentration_for_ML}
Mariia Vladimirova and Julyan Arbel.
\newblock Sub-weibull distributions: generalizing sub-gaussian and
  sub-exponential properties to heavier-tailed distributions.
\newblock {\em arXiv preprint arXiv:1905.04955}, 2019.

\end{thebibliography}


%


\end{document}